\numberwithin{equation}{section}
\theoremstyle{plain}
\newtheorem{theorem}{Theorem}[section]
\newtheorem{lemma}[theorem]{Lemma}
\newtheorem{proposition}[theorem]{Proposition}
\theoremstyle{definition}
\newtheorem{definition}[theorem]{Definition}
\newtheorem{remark}[theorem]{Remark}
\begin{document}

\begin{frontmatter}
\title{Strict solutions to  stochastic parabolic evolution equations in M-type 2 Banach spaces}
\runtitle{Stochastic parabolic evolution equations}

\begin{aug}
\author{T$\hat{\text{o}}$n Vi$\hat{\d{e}}$t T\d{a}\thanksref{t2}$,$ \ead[label=e1]{tavietton[at]agr.kyushu-u.ac.jp} Yoshitaka Yamamoto$,$ \ead[label=e2]{yamamoto[at]ist.osaka-u.ac.jp$$} Atsushi Yagi\ead[label=e3]{atsushi-yagi[at]ist.osaka-u.ac.jp}}
\runauthor{T\d{a}, Yamamoto \& Yagi}
\thankstext{t2}{This work   was supported by JSPS KAKENHI Grant Number 20140047.}
\affiliation{(Kyushu University and Osaka University, Japan)}

\address{T\d{a}, T.~V.\\
Center for Promotion of International Education and Research \\
  Faculty of Agriculture\\  
Kyushu University\\
6-10-1 Hakozaki, Higashi-ku, Fukuoka 812-8581, Japan\\
\printead{e1}\\
\\
Yamamoto, Y. \\
Department of Information and Physical Sciences\\
  Graduate School of Information Science
 and Technology \\
 Osaka University\\
  Suita, Osaka 565-0871, Japan\\
\printead{e2}\\
\\
Yagi, A. \\
Department of Applied Physics\\ 
Graduate School of Engineering\\
Osaka University\\
 Suita Osaka 565-0871, Japan\\
\printead{e3}}

\end{aug}

\begin{abstract}
{\bf Abstract.}  We study a  stochastic linear evolution equation  
$dX+A(t)Xdt=F(t)dt+ G(t)dw_t$ in  a Banach space of M-type 2. We construct  unique strict solutions to the equation on the basis of the theory of deterministic linear evolution equations. The abstract results are applied to stochastic diffusion equations.

{\it Key Words and Phrases.}  Analytical semigroups, Stochastic linear evolution equations, Strict solutions, M-type 2 Banach spaces.

{\it 2010 Mathematics Subject Classification Numbers.} 60H15, 35R60, 47D06
\end{abstract}


\end{frontmatter}

\section {Introduction}
We consider the Cauchy problem for a     stochastic parabolic evolution equation
\begin{equation} \label{E0}
\begin{cases}
dX+A(t)Xdt=F(t)dt+ G(t)dw_t,\hspace{1cm} 0<t\leq T,\\
X(0)=\xi
\end{cases}
\end{equation}
in a  complex separable Banach space $E$ with  norm $\|\cdot\|$. 
Here, $\{w_t, t\geq 0\}$ denotes  a $\mathbb C^d$\,{-}\,valued  Brownian motion $(d=1,2,3,\dots)$ on a  complete probability space  $(\Omega, \mathcal F,\mathbb P)$ with filtration $\{\mathcal F_t\}_{t\geq 0}$ satisfying the usual conditions. 
 Given functions $F$  and $G$ are  $E$\,{-}\,valued and $L(\mathbb C^d; E)$\,{-}\,valued progressively measurable   processes on $[0,T]$, respectively, where $L(\mathbb C^d; E)$ denotes the space of all bounded linear operators from $\mathbb C^d$ into $E$. Initial value $ \xi$ is an $E$\,{-}\,valued $\mathcal  F_0$\,{-}\,measurable  random variable.  And,  $A(t), 0\leq t\leq T,$ are   densely defined, closed 
 linear operators in $E$ which generate an evolution operator on $E$.

Stochastic evolution equations  have  been  studied by many researchers. A comprehensive theory of abstract stochastic evolution equations in Hilbert spaces can be found in Da Prato-Zabczyk \cite{prato}.  In Banach setting, 
stochastic integrations and stochastic evolution equations have been constructed in M-type 2 Banach spaces  (see Brze\'{z}niak   \cite{Brzezniak} and Dettweiler \cite{Dettweiler}), and in UMD Banach spaces of type 2 (see van Neerven et al.  \cite{van1,van2}).  Using the stochastic integration in M-type 2 Banach spaces, stochastic differential equations on Banach manifolds and set-valued stochastic differential equations have also investigated (see Brze\'{z}niak-Elworthy  \cite{Brzezniak3}, Malinowski \cite{Malinowski} and  references therein).

In the studies on stochastic evolution equations, the main interest was to construct unique mild solutions.
 Some researchers were, however,  devoted to constructing stronger solutions, strict solutions.

 In 1992, Da Prato-Zabczyk \cite{prato} studied a   stochastic linear evolution equation 
\begin{equation} \label{E0.1}
\begin{cases}
dX+AXdt=F(t)dt+ I dW(t),\hspace{1cm} 0<t\leq T,\\
X(0)=\xi
\end{cases}
\end{equation}
in a   separable Hilbert space $H$. Here, $A$ is a densely defined, closed 
 linear operator in $H$ which generates an analytic semigroup on $H$, 
 $F$ is a predictable, integrable process on $[0,T]$, $W$  is a $Q$\,-\,Wiener process on  $H$ with   a  symmetric nonnegative nuclear operator $Q$ in $ L(H)$, and $I$ is the identity operator in $H$.
Under the  conditions:
\begin{itemize}
 \item [{\rm (i)}] 
  The trace of $  Q  $ is finite
  \item [{\rm (ii)}]  $A^\beta Q^{\frac{1}{2}} $ for some $ \frac{1}{2}<\beta\leq 1  $ is a Hilbert-Schmidt operator
\end{itemize}
 the authors proved  
 existence of strong solutions. Clearly, the condition  {\rm (ii)}  is very restrictive. In the case of standard Brownian motions in $\mathbb R^d$, $Q$ becomes  the identity matrix of size $d$.  So, {\rm (ii)} implies that $A$ is a bounded linear operator of $H$.

Brze\'{z}niak \cite{Brzezniak}  treated  a   stochastic linear evolution equation 
\begin{equation} \label{E0.5}
\begin{cases}
dX+AXdt = F(t)dt+ \sum_{j=1}^d B_j Xdw^j(t), \hspace{1cm}  0<t\leq T, \\
X(0)=\xi
\end{cases}
\end{equation}
in an M-type 2  separable Banach space $E$.  Here, $A$ is as above, 
 $F$ is a progressively measurable, square integrable process on $[0,T]$, 
 $B_j$ $  (j=1,\dots,d)$ are unbounded linear operators in  $E$, 
and $w^j $ $(j=1,\dots,d)$ are  independent standard Brownian motions.
The author assumed that  
  $$\sum_{j=1}^d\|B_j u\|_{\mathcal D_A(\frac{1}{2},2)}^2 \leq C_1 \|u\|_{\mathcal D(A)}^2 + C_2\|u\|_{\mathcal D_A(\frac{1}{2},2)}^2, \hspace{1cm} u\in \mathcal D(A),$$ 
where $\mathcal D_A(\frac{1}{2},2)=\{u\in E; \int_0^\infty \|Ae^{tA}u\|^2 dt<\infty\}, $ and that  $$\xi \in L^2(\Omega, \mathcal D_A(\frac{1}{2},2)).$$ 
Then,   existence of a unique strict solution to \eqref{E0.5} has been proved in the space 
$$L^2([0,T]\times \Omega;\mathcal D(A)) \cap C([0,T];L^2(\Omega;\mathcal D_A(\frac{1}{2},2))).$$

However, to the best of our knowledge, no one has handled evolution equations of the form 
\eqref{E0} for constructing strict solutions. It is known that many interesting models introduced from the real world can be formulated by deterministic linear or nonlinear evolution equations of parabolic type. These equations generally not only generate a dynamical system but also enjoy a global attractor even a finite-dimensional attractor. Existence of such an attractor then suggests that the phenomena whose processes are described by these parabolic evolution equations enjoy some robustness in a certain abstract sense. Some may be the pattern formation and others may be the specific structure creation. In these cases, robustness of final states of process is one of main issues to be concerned. It is therefore quite natural in order to investigate the robustness to consider an advanced version of stochastic parabolic evolution equations. 

In this paper, we want to treat a rather simple case where the parabolic equation is linear and the noise is additive. But we will construct strict solutions possessing very strong regularities. Such regularities are necessary for constructing a stochastic dynamical system generated by \eqref{E0}.

The  paper is organized as follows. Section \ref{section2} is preliminary. 
Section \ref{section3} presents our main results on strict solutions. Section  \ref{section4} gives an example to illustrate our abstract results.

\section{Preliminary} \label{section2}
\subsection{$E$\,{-}\,valued stochastic processes}
Let us first restate  the Kolmogorov continuity theorem. 
For  $\sigma>0$ and $a<b$,  denote by $\mathcal C^\sigma([a,b];E)$  the space of  $E$\,{-}\,valued functions which are H\"{o}lder continuous on $[a,b]$ with exponent $\sigma$. 
The Kolmogorov continuity theorem gives a sufficient condition for a stochastic process to be H\"{o}lder continuous.
\begin{theorem} \label{Kol1}
Let $\zeta$ be  an $E$\,{-}\,valued stochastic process on $[0,T]$. Assume  that  for some  $c>0,$ $ \epsilon_1>1 $ and $ \epsilon_2>0,$ 
\begin{equation} \label{E11}
\mathbb E\|\zeta(t)-\zeta(s)\|^{\epsilon_1}\leq c (t-s)^{1+\epsilon_2}, \hspace{1cm} 0\leq s \leq t \leq T.
\end{equation}
Then, $\zeta$ has  a version whose $\mathbb P$\,{-}\,almost all trajectories are H\"{o}lder continuous functions with an  arbitrarily smaller  exponent than $\frac{\epsilon_2}{\epsilon_1}$.
\end{theorem}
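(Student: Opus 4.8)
The plan is to follow the classical dyadic chaining argument of Kolmogorov and Chentsov, which carries over verbatim to the Banach-space valued setting since only the norm $\|\cdot\|$ enters. Fix any exponent $\gamma$ with $0<\gamma<\frac{\epsilon_2}{\epsilon_1}$; it suffices to produce a single version whose trajectories are $\mathbb P$-a.s.\ $\gamma$-Hölder on $[0,T]$, and then take a countable union over $\gamma\uparrow\frac{\epsilon_2}{\epsilon_1}$. Introduce the dyadic nodes $t^n_k=\frac{kT}{2^n}$ for $k=0,1,\dots,2^n$ and let $D=\bigcup_{n\ge0}\{t^n_k\}$, a countable dense subset of $[0,T]$.

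First I would apply Markov's inequality to \eqref{E11}: for every $n$ and every $k$,
\begin{equation*}
\mathbb P\bigl(\|\zeta(t^n_k)-\zeta(t^n_{k-1})\|>2^{-n\gamma}\bigr)\le 2^{n\gamma\epsilon_1}\,\mathbb E\|\zeta(t^n_k)-\zeta(t^n_{k-1})\|^{\epsilon_1}\le c\,T^{1+\epsilon_2}\,2^{-n(1+\epsilon_2-\gamma\epsilon_1)}.
\end{equation*}
Summing over the $2^n$ admissible values of $k$ gives a bound $c\,T^{1+\epsilon_2}\,2^{-n(\epsilon_2-\gamma\epsilon_1)}$, and summing over $n$ yields a convergent series because $\gamma\epsilon_1<\epsilon_2$. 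By the Borel–Cantelli lemma there is a $\mathbb P$-null set off which there exists a finite $N=N(\omega)$ such that $\|\zeta(t^n_k)-\zeta(t^n_{k-1})\|\le 2^{-n\gamma}$ for all $n\ge N$ and all $1\le k\le 2^n$.

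Next comes the chaining step, which is the technical heart of the proof. Working on this full-measure event, one shows by induction on the resolution level that any two dyadic points $s,t\in D$ with $|t-s|\le 2^{-N}$ satisfy $\|\zeta(t)-\zeta(s)\|\le C_\gamma|t-s|^\gamma$: writing $s$ and $t$ in finite dyadic expansions, one telescopes $\zeta(t)-\zeta(s)$ through their coarsest common refinement, picking up at most two increments at each level $n\ge m$ where $2^{-m-1}<|t-s|\le 2^{-m}$, and sums the geometric series $\sum_{n\ge m}2\cdot2^{-n\gamma}\le C_\gamma 2^{-m\gamma}\le C_\gamma|t-s|^{\gamma}$, with $C_\gamma$ depending only on $\gamma$. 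Concatenating at most $2^N$ such short sub-steps produces a global estimate $\|\zeta(t)-\zeta(s)\|\le K(\omega)\,|t-s|^\gamma$ valid for all $s,t\in D$. Hence $\zeta|_D$ is $\mathbb P$-a.s.\ uniformly continuous and extends uniquely to a process $\widetilde\zeta$ with $\widetilde\zeta(\cdot,\omega)\in\mathcal C^\gamma([0,T];E)$ off the null set (and set, say, $\widetilde\zeta\equiv0$ on it).

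Finally I would verify that $\widetilde\zeta$ is a version of $\zeta$. Fix $t\in[0,T]$ and choose $t_j\in D$ with $t_j\to t$; then \eqref{E11} gives $\mathbb E\|\zeta(t_j)-\zeta(t)\|^{\epsilon_1}\to0$, so $\zeta(t_j)\to\zeta(t)$ in probability, while $\zeta(t_j)=\widetilde\zeta(t_j)\to\widetilde\zeta(t)$ $\mathbb P$-a.s.\ by construction; uniqueness of limits in probability forces $\widetilde\zeta(t)=\zeta(t)$ $\mathbb P$-a.s. I expect the only delicate point to be the bookkeeping in the telescoping estimate and checking that the Hölder constant can be chosen uniformly over all dyadic pairs; the remaining ingredients — Markov's inequality, Borel–Cantelli, and the passage from uniform continuity on $D$ to a continuous extension — are routine.
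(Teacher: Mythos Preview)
Your argument is correct: this is the classical Kolmogorov--Chentsov dyadic chaining proof, and it goes through unchanged in a Banach space since only the norm appears. Note, however, that the paper does not supply its own proof of this theorem; it simply cites Da Prato--Zabczyk \cite{prato}, and what you have written is precisely the standard argument found there.
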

When  $\zeta$ is a Gaussian process, one can weaken the condition \eqref{E11}.
\begin{theorem}  \label{Kol2}
Let $\zeta$ be an $E$\,{-}\,valued   Gaussian process on $[0,T]$ such that $\mathbb E \zeta(t)=0$ for  $ t\geq 0$.  Assume that  for some  $c>0$ and $ 0<\epsilon\leq 1,$
$$
\mathbb E\|\zeta(t)-\zeta(s)\|^2\leq c (t-s)^\epsilon, \hspace{1cm} 0\leq s \leq t \leq T.
$$
Then, there exists a modification of $\zeta$ whose $\mathbb P$\,{-}\,almost all trajectories are H\"{o}lder continuous functions with an  arbitrarily smaller  exponent than $\frac{\epsilon}{2}$.
\end{theorem}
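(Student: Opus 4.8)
The plan is to derive Theorem \ref{Kol2} from the general Kolmogorov criterion (Theorem \ref{Kol1}) by trading the single second-moment bound on the increments for a bound on a high moment, which is possible because all moments of a centered Gaussian vector are mutually comparable. So the whole argument is: boost the exponent, then quote Theorem \ref{Kol1}.

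First I would fix $0\le s\le t\le T$ and set $\eta=\zeta(t)-\zeta(s)$. Since $\zeta$ is an $E$\,{-}\,valued Gaussian process with $\mathbb E\zeta(\cdot)=0$, the increment $\eta$ is a centered $E$\,{-}\,valued Gaussian random variable. By Fernique's theorem one has $\mathbb E\exp(\alpha\|\eta\|^2)<\infty$ for a suitable $\alpha>0$ (here the separability of $E$ is used); rescaling so that $\mathbb E\|\eta\|^2=1$ and reading off Fernique's estimate, one obtains for every real $p\ge 2$ a constant $c_p$ depending only on $p$ — and, crucially, not on $s$ or $t$ — such that
$$
\mathbb E\|\eta\|^p\le c_p\bigl(\mathbb E\|\eta\|^2\bigr)^{p/2}.
$$
Combined with the hypothesis this gives, for all $0\le s\le t\le T$,
$$
\mathbb E\|\zeta(t)-\zeta(s)\|^p\le c_p\,c^{p/2}\,(t-s)^{p\epsilon/2}.
$$

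Next I would apply Theorem \ref{Kol1} with $\epsilon_1=p$ and $\epsilon_2=\tfrac{p\epsilon}{2}-1$; this choice is admissible precisely when $p>2/\epsilon$, which forces $\epsilon_2>0$. Theorem \ref{Kol1} then yields a version of $\zeta$ whose $\mathbb P$\,{-}\,almost all trajectories are H\"older continuous with any exponent strictly below $\epsilon_2/\epsilon_1=\tfrac{\epsilon}{2}-\tfrac1p$. Since any two versions of $\zeta$ with continuous trajectories almost surely agree on a countable dense subset of $[0,T]$ and are therefore indistinguishable, one single modification of $\zeta$ works for every $p>2/\epsilon$ at once; letting $p\to\infty$ shows that this modification has trajectories that are H\"older continuous with an arbitrarily smaller exponent than $\tfrac{\epsilon}{2}$, as claimed.

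The step that needs care is the moment comparison: in a Hilbert space it would follow from an explicit computation with the covariance operator, but in a general separable Banach space it rests on Fernique's theorem, and one must verify that the constant $c_p$ is genuinely uniform in $s,t$ — which it is, because after the normalization $\mathbb E\|\eta\|^2=1$ the resulting bound depends only on $p$. The remaining points are routine: checking the admissibility condition $p>2/\epsilon$, and the standard indistinguishability argument that allows passage to a single good modification.
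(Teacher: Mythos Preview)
Your argument is correct and is exactly the standard route: use the universal moment comparison for centered Gaussian vectors in a separable Banach space (via Fernique or Kahane--Khintchine) to upgrade the second-moment increment bound to a $p$-th moment bound, then invoke Theorem~\ref{Kol1} and let $p\to\infty$. The paper does not actually supply a proof of Theorem~\ref{Kol2}; it simply refers the reader to \cite{prato}, where precisely this argument is given, so there is nothing to compare beyond noting that your proposal matches the cited source.
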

For the proofs of Theorems \ref{Kol1} and \ref{Kol2}, see, e.g.,    \cite{prato}.

Let us next  review the notion of  Banach spaces of M-type $2$ and   some properties of  stochastic integrals. 
\begin{definition}[Pisier \cite{Pisier}]  \label{MType2BanachSpace}
A Banach space $E$ is said to be of martingale type $2$ (or M-type 2 for abbreviation),  if there is a constant $c(E)$ such that for all $E$\,{-}\,valued martingales $\{M_n\}_n,$ it holds true that 
$$\sup_n \mathbb E\|M_n\|^2 \leq c(E) \sum_{n\geq 0}\mathbb E \|M_n-M_{n-1}\|^2,$$
where $M_{-1}=0$.
\end{definition}
It is known that the Hilbert space is of M-type $2$ and that, when $2\leq p<\infty,$
the $L^p$ space is the same.

When $E$ is of M-type 2, stochastic integrals in  $E$ can be constructed in a quite similar way as for the  usual  It\^{o}  integrals.

\begin{definition}   \label{Def1}
The set of all $L(\mathbb C^d;E)$\,{-}\,valued progressively measurable  processes $\phi$ such that  
$$\mathbb E\int_0^T \|\phi(t)\|_{L(\mathbb C^d;E)}^2 dt<\infty$$
(resp.
$$\int_0^T \|\phi(t)\|_{L(\mathbb C^d;E)}^2 dt<\infty \hspace{1cm} \text{a.s.})$$
is denoted by   $\mathcal N^2(0,T)$ (resp. $\ \mathcal N(0,T)$).
\end{definition}
One can construct stochastic  integrals $\int_0^t \phi(s)dw_s$ for all $\phi \in \mathcal N^2(0,T)$. By localization procedure, the class of integrand can be extended  to $\ \mathcal N(0,T),$ too, see   \cite{Dettweiler}.

\begin{theorem}\label{IntegralInequality}
Let $\{w_t, t\geq 0\}$ be  a $\mathbb C^d$\,{-}\,valued  Brownian motion  on a  filtered probability space  $(\Omega, \mathcal F,\mathcal F_t, \mathbb P)$. Let $E$ be a Banach space  of M-type $2$.  Then, 
\begin{itemize}
  \item [{\rm (i)}] There exists  $c(E)>0$ depending only on  $ E$ such that for  $\phi \in \mathcal N^2(0,T),$
  $$\mathbb E\Big\|\int_0^t \phi(s)dw_s\Big\|^2\leq c(E) \int_0^t \mathbb E\|\phi(s)\|_{L(\mathbb C^d;E)}^2ds, \hspace{1cm} 0\leq t\leq T.$$
  \item [{\rm (ii)}] $\{\int_0^t \phi(s)dw_s, 0\leq t\leq T\}$ is an $E$\,{-}\,valued continuous martingale.
  \item [{\rm (iii)}] (Burkholder-Davis-Gundy inequality) For any $p>1,$ there exists  $c_p(E)$ $>0$   depending only on  $p$ and  $ E$ such that for  $\phi \in \mathcal N^2(0,T)$, 
$$\mathbb E \sup_{t\in [0,T]} \Big\|\int_0^t \phi(s) dw_s\Big\|^p \leq \Big(\frac{p}{p-1}\Big)^p c_p(E) \mathbb E\Big[\int_0^T \|\phi(s)\|_{L(\mathbb C^d;E)}^2 ds\Big]^{\frac{p}{2}}.$$
\end{itemize}
\end{theorem}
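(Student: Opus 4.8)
The plan is to prove all three assertions first for \emph{elementary} processes and then to pass to the general case $\phi\in\mathcal N^2(0,T)$ by density and the $L^2$\,-\,continuity with which $\int_0^{\,\cdot}\phi\,dw$ was constructed. Call $\phi$ elementary if $\phi=\sum_{j=0}^{N-1}\phi_j\mathbf 1_{(t_j,t_{j+1}]}$ for a partition $0=t_0<\dots<t_N=T$ and bounded $\mathcal F_{t_j}$\,-\,measurable $L(\mathbb C^d;E)$\,-\,valued random variables $\phi_j$; such processes are dense in $\mathcal N^2(0,T)$ for the norm $\phi\mapsto(\mathbb E\int_0^T\|\phi(s)\|_{L(\mathbb C^d;E)}^2ds)^{1/2}$, and for them $\int_0^t\phi\,dw_s=\sum_j\phi_j(w_{t_{j+1}\wedge t}-w_{t_j\wedge t})$ is an explicit finite sum. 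Throughout I will take, without loss of generality, $t$ to be one of the nodes $t_j$ by refining the partition, so that $\int_0^t\phi\,dw_s$ equals a partial sum of this series.

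For (i), fix an elementary $\phi$ and set $M_n=\sum_{j<n}\phi_j(w_{t_{j+1}\wedge t}-w_{t_j\wedge t})$. Since $\phi_{n-1}$ is $\mathcal F_{t_{n-1}}$\,-\,measurable and the increment $w_{t_n}-w_{t_{n-1}}$ is centred and independent of $\mathcal F_{t_{n-1}}$, the sequence $\{M_n\}$ is an $E$\,-\,valued $\{\mathcal F_{t_n}\}$\,-\,martingale with $M_{-1}=M_0=0$. Applying Definition \ref{MType2BanachSpace} gives $\mathbb E\|\int_0^t\phi\,dw\|^2\le\sup_n\mathbb E\|M_n\|^2\le c(E)\sum_j\mathbb E\|\phi_j(w_{t_{j+1}\wedge t}-w_{t_j\wedge t})\|^2$; estimating $\|\phi_j(\Delta_jw)\|\le\|\phi_j\|_{L(\mathbb C^d;E)}\,|\Delta_jw|_{\mathbb C^d}$ and conditioning on $\mathcal F_{t_j}$ yields $\mathbb E\|\phi_j(\Delta_jw)\|^2\le\kappa\,\mathbb E\|\phi_j\|_{L(\mathbb C^d;E)}^2(t_{j+1}\wedge t-t_j\wedge t)$ with $\kappa=\mathbb E|w_1|_{\mathbb C^d}^2$, and summation produces $(\mathrm i)$ for elementary $\phi$, the constant $c(E)\kappa$ being absorbed into $c(E)$. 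Finally, if elementary $\phi^{(n)}\to\phi$ in $\mathcal N^2(0,T)$ then $\int_0^t\phi^{(n)}dw\to\int_0^t\phi\,dw$ in $L^2(\Omega;E)$ and the inequality passes to the limit, giving $(\mathrm i)$ in general.

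For (ii), an elementary $\phi$ manifestly gives a continuous process (a finite sum of continuous processes), and it is a martingale by the same conditioning argument as above. For general $\phi$ choose elementary $\phi^{(n)}\to\phi$ in $\mathcal N^2(0,T)$; applying Doob's $L^2$\,-\,maximal inequality to the real submartingale $\|\int_0^{\,\cdot}(\phi^{(n)}-\phi^{(m)})dw\|$ and then $(\mathrm i)$ to the elementary difference $\phi^{(n)}-\phi^{(m)}$ gives $\mathbb E\sup_{t\le T}\|\int_0^t(\phi^{(n)}-\phi^{(m)})dw\|^2\le 4c(E)\|\phi^{(n)}-\phi^{(m)}\|_{\mathcal N^2(0,T)}^2\to0$, so $\{\int_0^{\,\cdot}\phi^{(n)}dw\}$ is Cauchy in $L^2(\Omega;\mathcal C([0,T];E))$. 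Its limit therefore has $\mathbb P$\,-\,a.s. continuous trajectories, coincides for each fixed $t$ with the $L^2$\,-\,limit $\int_0^t\phi\,dw$, and inherits the martingale identity because conditional expectation is an $L^2$\,-\,contraction.

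For (iii), by (ii) the process $\int_0^{\,\cdot}\phi\,dw$ is a continuous $E$\,-\,valued martingale, so $t\mapsto\|\int_0^t\phi\,dw\|$ is a continuous real submartingale; Doob's $L^p$\,-\,maximal inequality ($p>1$) produces the factor $(p/(p-1))^p$ and reduces the claim to the endpoint bound $\mathbb E\|\int_0^T\phi\,dw\|^p\le c_p(E)\,\mathbb E[\int_0^T\|\phi(s)\|_{L(\mathbb C^d;E)}^2ds]^{p/2}$. For $p\ge2$ and elementary $\phi$ I would invoke the $L^p$\,-\,form of the defining inequality of Definition \ref{MType2BanachSpace}, which holds in every M-type $2$ space (see \cite{Pisier}), applied to $\{M_n\}$, giving $\mathbb E\|M_N\|^p\le c_p(E)\,\mathbb E(\sum_j\|\phi_j(\Delta_jw)\|^2)^{p/2}\le c_p(E)\,\mathbb E(\sum_j\|\phi_j\|_{L(\mathbb C^d;E)}^2|\Delta_jw|^2)^{p/2}$, and then dominate the last discrete sum by $\mathbb E(\int_0^T\|\phi\|^2)^{p/2}$ by writing $|\Delta_jw|^2=(t_{j+1}-t_j)+(|\Delta_jw|^2-(t_{j+1}-t_j))$ and controlling the martingale remainder with the scalar Burkholder--Davis--Gundy inequality; the range $1<p<2$ then follows from the case $p=2$ by the classical good\,-\,$\lambda$/stopping\,-\,time argument, and density finishes. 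The main obstacle is exactly this last part: bounding $\mathbb E(\sum_j\|\phi_j\|^2|\Delta_jw|^2)^{p/2}$ by $\mathbb E(\int_0^T\|\phi\|^2)^{p/2}$ uniformly over partitions (a careless estimate leaves a mesh\,-\,size factor) and treating $1<p<2$ genuinely require the BDG machinery rather than H\"older's inequality; alternatively one may simply cite \cite{Dettweiler} and \cite{Brzezniak}, where precisely this construction and these inequalities are carried out.
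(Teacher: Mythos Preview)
The paper does not prove this theorem at all: immediately after the statement it writes ``For the proof, see \cite{Dettweiler}.'' So there is nothing to compare your argument against except a citation.

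Your sketch for (i) and (ii) is the standard construction and is correct; indeed, the material left after \verb|\end{document}| in the source (which never made it into the compiled paper) contains exactly your argument for (i) on step functions. For (iii) you correctly isolate the Doob factor $(p/(p-1))^p$ and reduce to an endpoint estimate, but your honest closing remark is the right assessment: the step from $\mathbb E\bigl(\sum_j\|\phi_j\|^2|\Delta_jw|^2\bigr)^{p/2}$ to $\mathbb E\bigl(\int_0^T\|\phi\|^2\bigr)^{p/2}$, and the full range $1<p<2$, genuinely require the discrete BDG machinery in M-type~$2$ spaces rather than anything one can write down in a paragraph. The ``$L^p$-form of Definition~\ref{MType2BanachSpace}'' you invoke is not the definition itself but a theorem (Assouad--Pisier type square-function inequalities), so citing \cite{Dettweiler} or \cite{Brzezniak} here---which is exactly what the paper does---is the appropriate move.
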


For the proof, see \cite{Dettweiler}.  

\begin{proposition}   \label{pro1}
Let $\{w_t, t\geq 0\}$ be  a $\mathbb C^d$\,{-}\,valued  Brownian motion  on a  filtered probability space  $(\Omega, \mathcal F,\mathcal F_t, \mathbb P)$. Let $E$ be a Banach space  of M-type $2$.  Let $B$ be a closed linear operator in $E$ and $\phi\colon [0,T] \subset \mathbb R\to (L(\mathbb C^d;\mathcal D(B)).$
 If $\phi$ and $B\phi$  belong to $  \mathcal N^2(0,T), $   then
$$B\int_0^T \phi(t)dw_t=\int_0^T B\phi(t)dw_t  \hspace{1cm}  \text{  a.s.}$$
\end{proposition}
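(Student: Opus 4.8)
The plan is to prove the identity first for simple integrands, where it reduces to finite linearity of $B$, and then to bootstrap to the general case by an approximation that is compatible with both $\phi$ and $B\phi$, using the closedness of $B$ to pass to the limit.

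\emph{Reduction to simple integrands.} If $\phi=\sum_{j=0}^{m-1}\mathbf 1_{(t_j,t_{j+1}]}\phi_j$ with $0=t_0<\dots<t_m=T$ and each $\phi_j$ an $\mathcal F_{t_j}$\,{-}\,measurable $L(\mathbb C^d;\mathcal D(B))$\,{-}\,valued random variable, then $\int_0^T\phi(t)\,dw_t=\sum_{j=0}^{m-1}\phi_j(w_{t_{j+1}}-w_{t_j})$, which a.s. belongs to $\mathcal D(B)$; since $B$ is linear and each $\phi_j(\omega)$ maps into $\mathcal D(B)$,
$$B\int_0^T\phi(t)\,dw_t=\sum_{j=0}^{m-1}(B\phi_j)(w_{t_{j+1}}-w_{t_j})=\int_0^T B\phi(t)\,dw_t\qquad\text{a.s.}$$
No closedness is used here.

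\emph{Approximation in the graph norm.} Since $B$ is closed, $\mathcal D(B)$ with the graph norm $\|x\|_{\mathcal D(B)}=\|x\|+\|Bx\|$ is a Banach space, and, $E$ being separable, so is $\mathcal D(B)$ (it embeds isometrically into $E\times E$ via $x\mapsto(x,Bx)$). The hypothesis $\phi,B\phi\in\mathcal N^2(0,T)$ is precisely the statement that $\phi$, viewed as an $L(\mathbb C^d;\mathcal D(B))$\,{-}\,valued predictable process, satisfies $\mathbb E\int_0^T\|\phi(t)\|_{L(\mathbb C^d;\mathcal D(B))}^2\,dt<\infty$. By the usual density of simple processes in this class — obtained, e.g., by first mollifying in time ($\phi_h(t)=h^{-1}\int_{(t-h)\vee 0}^t\phi(s)\,ds$, which stays $\mathcal D(B)$\,{-}\,valued by completeness of $\mathcal D(B)$, and is adapted), then discretizing in time, everything carried out inside the Banach space $\mathcal D(B)$ — there exist simple $L(\mathbb C^d;\mathcal D(B))$\,{-}\,valued processes $\phi_n$ with
$$\mathbb E\int_0^T\|\phi_n(t)-\phi(t)\|^2\,dt\to0\qquad\text{and}\qquad\mathbb E\int_0^T\|B\phi_n(t)-B\phi(t)\|^2\,dt\to0.$$

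\emph{Passage to the limit.} By Theorem \ref{IntegralInequality}(i), $\int_0^T\phi_n(t)\,dw_t\to\int_0^T\phi(t)\,dw_t$ and $\int_0^T B\phi_n(t)\,dw_t\to\int_0^T B\phi(t)\,dw_t$ in $L^2(\Omega;E)$; passing to a subsequence we may assume both limits hold $\mathbb P$\,{-}\,a.s. For each $n$, Step~1 gives $B\int_0^T\phi_n\,dw_t=\int_0^T B\phi_n\,dw_t$ a.s. Hence, along the subsequence, $\int_0^T\phi_n\,dw_t\to\int_0^T\phi\,dw_t$ a.s. while $B\int_0^T\phi_n\,dw_t\to\int_0^T B\phi\,dw_t$ a.s.; since $B$ is closed it follows that $\int_0^T\phi\,dw_t\in\mathcal D(B)$ and $B\int_0^T\phi\,dw_t=\int_0^T B\phi\,dw_t$ a.s., which is the assertion. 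I expect the only delicate point to be the approximation step: the simple processes must approximate $\phi$ and $B\phi$ \emph{simultaneously}, i.e. one needs approximants whose values lie in the graph of $B$; recasting the approximation as one in the Banach space $\mathcal D(B)$ (legitimate precisely because $B$ is closed) removes this difficulty, and the remainder is a routine limiting argument powered by Theorem \ref{IntegralInequality}(i).
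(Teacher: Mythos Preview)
Your argument is correct and is precisely the standard one: the paper itself omits the proof, merely noting that it is ``very similar to one in \cite{prato}'', and what you have written is exactly that argument adapted to the M-type~$2$ setting (prove the identity for simple $L(\mathbb C^d;\mathcal D(B))$-valued integrands by linearity, approximate in the graph norm using that $\mathcal D(B)$ is a separable Banach space, and pass to the limit via Theorem~\ref{IntegralInequality}(i) and the closedness of $B$). There is nothing to add.
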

The proof of Proposition \ref{pro1} is very similar to one in   \cite{prato}. So, we omit it.
\subsection{Deterministic linear evolution equations}
Consider the Cauchy problem for a deterministic linear evolution equation
\begin{equation} \label{E1*}
\begin{cases}
\frac{dX}{dt}+A(t)X=F(t),\hspace{1cm} 0<t\leq T,\\
X(0)=\xi
\end{cases}
\end{equation}
in  $E$. Here, $A(t), t\geq 0,$ are densely defined, closed linear operators in $E$ satisfying the following   conditions. 
  \begin{itemize}
  \item [{\rm (A1)}] For  $0\leq t\leq T$, the spectrum $\sigma(A(t))$ and the resolvent of $A(t)$        
 satisfy
\begin{equation*} \label{E2} 
\sigma(A(t)) \subset  \Sigma_{\varpi}=\{\lambda \in \mathbb C: |\arg \lambda|<\varpi\} 
       \end{equation*}
 and  
\begin{equation*} \label{E3}
          \|(\lambda-A(t))^{-1}\| \leq \frac{M_{\varpi}}{|\lambda|}, \hspace{1cm}   \lambda \notin \Sigma_{\varpi}
     \end{equation*}
     with some   $\varpi\in (0,\frac{\pi}{2})$ and $M_{\varpi}>0$.
  \item  [{\rm (A2)}] There exists $0<\nu\leq 1$ such that
    \begin{equation*}\label{H3}
    \mathcal D(A(s))\subset \mathcal D(A(t)^\nu), \hspace{1cm}  0\leq s,t\leq T.
    \end{equation*}
    \item [{\rm (A3)}] There exist  $1-\nu<\mu \leq 1$ and  $N>0$ such that 
      \begin{equation*}
   \|A(t)^\nu[A(t)^{-1}-A(s)^{-1}]\|\leq N(t-s)^\mu, \hspace{1cm}  0\leq s\leq t\leq T.
   \end{equation*}
   \end{itemize} 
Meanwhile, 
  $F$ is an $E$\,{-}\,valued function on $[0,T]$ belonging to 
\begin{itemize}
  \item [(F)] \hspace{1cm}  $ F\in \mathcal F^{\beta, \sigma} ((0,T];E)$,  where $0<\beta\leq 1$ \\
   \hspace*{1cm}  and  $0<\sigma< \min\{\beta,\mu+\nu-1\}.$ 
 \end{itemize}
Here,  $\mathcal F^{\beta, \sigma}((0,T];E)$ denotes  a weighted H\"{o}lder continuous function space introduced in  \cite{yagi} which consists of all $E$\,{-}\,valued continuous functions $f$ on $(0,T]$ (resp. $[0,T]$) when $0<\beta<1$ (resp. $\beta=1$)  with  the properties:
\begin{itemize}
  \item [\rm (i)] When $\beta<1$, 
$  
  t^{1-\beta} f(t) $  has a limit as $ t\to 0.
  $
  \item [\rm (ii)]  $f$ is H\"{o}lder continuous with   exponent $\sigma$ and with  weight $s^{1-\beta+\sigma}$, i.e., 
  \begin{equation*} 
\begin{aligned}
&\sup_{0\leq s<t\leq T} \frac{s^{1-\beta+\sigma}\|f(t)-f(s)\|}{(t-s)^\sigma}\\
&=\sup_{0\leq t\leq T}\sup_{0\leq s<t}\frac{s^{1-\beta+\sigma}\|f(t)-f(s)\|}{(t-s)^\sigma}<\infty.
\end{aligned}
\end{equation*}
  \item [\rm (iii)] 
  \begin{equation*} 
  \lim_{t\to 0} w_{f}(t)=0,
  \end{equation*}
  where $w_{f}(t)=\sup_{0\leq s  <t}\frac{s^{1-\beta+\sigma}\|f(t)-f(s)\|}{(t-s)^\sigma}$.
\end{itemize}  
It is clear that   $\mathcal F^{\beta, \sigma}((0,T];E)$ is a Banach space with  norm
$$\|f\|_{\mathcal F^{\beta, \sigma}}=\sup_{0\leq t\leq T} t^{1-\beta} \|f(t)\|+ \sup_{0\leq s<t\leq T} \frac{s^{1-\beta+\sigma}\|f(t)-f(s)\|}{(t-s)^\sigma}.$$
 By the definition, for  $ f\in \mathcal F^{\beta, \sigma}((0,T];E),$  
\begin{equation} \label{E12}  
\begin{aligned}
\|f(t)\| & \leq   \|f\|_{\mathcal F^{\beta, \sigma}} t^{\beta-1}, \hspace{1cm} 0<t\leq T,\\
 \|f(t)-f(s)\|  & \leq  w_{f}(t) (t-s)^{\sigma} s^{\beta-\sigma-1}\\
& \leq \|f\|_{\mathcal F^{\beta, \sigma}} (t-s)^{\sigma} s^{\beta-\sigma-1},  \hspace{1cm} 0<s\leq t\leq T.
\end{aligned}
\end{equation}

According to  \cite{yagi0,yagi}, the following results are known.
\begin{theorem}
Let $A(t), 0\leq t\leq T,$ satisfy {\rm (A1)},  {\rm (A2)} and  {\rm (A3)}. Then, there exists a unique evolution operator $U(t,s), 0\leq s\leq t\leq T, $ having the following properties:
\begin{itemize}
\item  [\rm (i)] $U(t,s)$ is a bounded linear operator on $E$ with
\begin{equation*}
\begin{aligned}
\begin{cases}
U(t,r)=U(t,s)U(s,r), &\hspace{1cm} 0\leq r\leq s\leq t\leq T,\\
U(s,s)=I, &\hspace{1cm} 0\leq s\leq T.
\end{cases}
\end{aligned}
\end{equation*}
\item  [\rm (ii)] For  $0\leq \theta< \mu+\nu,$ $\mathcal R(U(t,s))\subset \mathcal D(A(t)^\theta),$ and   there exists $\iota_\theta>0$ such that
\begin{equation} \label{E4}
\|A(t)^\theta U(t,s)\| \leq \iota_\theta (t-s)^{-\theta}, \hspace{1cm}  0\leq s< t\leq T.
\end{equation}
   \item  [\rm (iii)]   For  $0\leq \theta_1\leq  1$ and $\theta_1 \leq \theta_2< \mu+\nu,$ there exists  $\kappa_{\theta_2}>0$ such that
\begin{equation} \label{E8}
\|A(t)^{\theta_2}U(t,s)A(s)^{\theta_1-\theta_2}\| \leq \kappa_{\theta_2}  (t-s)^{-\theta_1}, \hspace{1cm} 0\leq s<t\leq T.
\end{equation}
\item  [\rm (iv)] There exists $c_{\mu,\nu}>0$ such that
\begin{align} 
 \|& A(t)U(t,s)A(s)^{-1}-e^{-(t-s)A(s)}\|    \label{E9}  \\
& \leq c_{\mu,\nu} (t-s)^{\mu+\nu-1}, \hspace{1cm}  0\leq s<t\leq T,   \notag 
\end{align}
where $\{e^{-\tau A(s)}\}_{\tau\geq 0}$ is the semigroup generated by $A(s).$
\end{itemize}
In addition, for  $0\leq \theta\leq 1,$
 \begin{equation}  \label{E6}
   \|A(t)^{-\theta}\|\leq \iota_\theta,  \hspace{1cm}  0\leq t\leq T,
   \end{equation}
where $\iota_\theta$ is the constant in \eqref{E4}.
\end{theorem}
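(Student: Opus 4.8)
The plan is to carry out the classical Tanabe--Sobolevskii construction of the parabolic evolution operator, in the refined form of Yagi \cite{yagi0,yagi} that accommodates the fractional H\"older condition {\rm (A3)}. First I would record what {\rm (A1)} gives: since $-A(t)$ is uniformly sectorial on $[0,T]$, it generates an analytic semigroup $\{e^{-\tau A(t)}\}_{\tau\geq0}$ given by a Dunford integral $\frac{1}{2\pi i}\int_{\gamma}e^{\tau\lambda}(\lambda-A(t))^{-1}\,d\lambda$ over a contour $\gamma$ around $\Sigma_{\varpi}$, and the resolvent bound yields, uniformly in $t$, $\|e^{-\tau A(t)}\|\leq C$ and $\|A(t)^{\theta}e^{-\tau A(t)}\|\leq C_{\theta}\tau^{-\theta}$ for $\tau>0$, $\theta\geq0$. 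The bound \eqref{E6} is then immediate from the integral representation $A(t)^{-\theta}=\frac{1}{\Gamma(\theta)}\int_0^\infty\tau^{\theta-1}e^{-\tau A(t)}\,d\tau$ ($0<\theta<1$, and by composition for $1\leq\theta\leq\mu+\nu$) together with the uniform semigroup bound; the resulting constant can be taken equal to the $\iota_{\theta}$ of \eqref{E4}.

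Next I would set up the Volterra integral equation that defines $U$. Looking for a perturbation of the semigroup frozen at the initial time, $U(t,s)=e^{-(t-s)A(s)}+\int_s^t e^{-(t-\sigma)A(\sigma)}\Phi(\sigma,s)\,d\sigma$, and imposing $\partial_tU+A(t)U=0$, $U(s,s)=I$, one is led to $\Phi(t,s)=R(t,s)+\int_s^tR(t,\sigma)\Phi(\sigma,s)\,d\sigma$ with kernel $R(t,\sigma)=-(A(t)-A(\sigma))e^{-(t-\sigma)A(\sigma)}$. The central estimate is to rewrite $R$ via fractional powers---using {\rm (A2)} to give sense to the $A(\cdot)^{-1}$-factors and {\rm (A3)} through $\|A(t)^{\nu}[A(t)^{-1}-A(\sigma)^{-1}]\|\leq N(t-\sigma)^{\mu}$, with the surplus power of $A(t)$ absorbed into an adjacent semigroup factor---to obtain the weakly singular bound $\|R(t,\sigma)\|\leq C(t-\sigma)^{\mu+\nu-2}$; the hypothesis $\mu+\nu>1$ is exactly what makes this exponent exceed $-1$. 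The iterated convolution kernels then obey, by the Beta-function identity, $\|R^{(n)}(t,s)\|\leq[C\,\Gamma(\mu+\nu-1)]^{n}\,[\Gamma(n(\mu+\nu-1))]^{-1}(t-s)^{n(\mu+\nu-1)-1}$, so $\Phi:=\sum_{n\geq1}R^{(n)}$ converges absolutely and uniformly on $\{0\leq s\leq t\leq T\}$; this defines $\Phi$, hence $U(t,s)$, and renders all the integrals absolutely convergent.

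Then I would verify {\rm (i)}--{\rm (iv)}. Strong continuity of $(t,s)\mapsto U(t,s)$ and $U(s,s)=I$ are read off the formula; the evolution law $U(t,r)=U(t,s)U(s,r)$ follows by differentiating $\sigma\mapsto U(t,\sigma)U(\sigma,r)$ on $(r,t)$ and using that $U(t,\cdot)$, $U(\cdot,r)$ solve the backward and forward equations attached to \eqref{E1*}; uniqueness of the evolution operator is a Gronwall argument using the weak singularity of $R$. For \eqref{E4} one applies $A(t)^{\theta}$ to the defining formula: $A(t)^{\theta}e^{-(t-s)A(s)}$ is handled by inserting $A(s)^{-\theta}$ and combining {\rm (A2)}/interpolation with $\|A(s)^{\theta}e^{-(t-s)A(s)}\|\leq C_{\theta}(t-s)^{-\theta}$, while $A(t)^{\theta}\int_s^te^{-(t-\sigma)A(\sigma)}\Phi(\sigma,s)\,d\sigma$ is split at $\sigma=(s+t)/2$, the first half being controlled by semigroup smoothing and the second by the bound on $\Phi$ above; the range restriction $\theta<\mu+\nu$ appears precisely because one loses $\tau^{-\theta}$ from the semigroup while only gaining $\tau^{\mu+\nu-1}$ from the kernel iteration. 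Estimate \eqref{E8} is the same computation with an extra factor $A(s)^{\theta_1-\theta_2}$ inserted and rebalanced. Finally the fine comparison \eqref{E9} is extracted from the structure of the series: after inserting the appropriate powers of $A(t)$ and $A(s)$, the difference $A(t)U(t,s)A(s)^{-1}-e^{-(t-s)A(s)}$ is shown to be governed at leading order by the single commutator $(A(t)-A(s))A(s)^{-1}$, whose size {\rm (A3)} controls by $(t-s)^{\mu+\nu-1}$, the integral remainder contributing the same order since each factor $R(\sigma,s)$ already carries the surplus power from the kernel estimate.

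The part I expect to be hardest is the kernel estimate and its refinements in the verification of \eqref{E4}, \eqref{E8}, \eqref{E9}: because {\rm (A3)} controls the H\"older variation of $t\mapsto A(t)^{-1}$ only after pre-multiplication by $A(t)^{\nu}$ --- and not by $A(t)$ --- every manipulation of $R$ and of $U$ must keep the block $A(t)^{\nu}[A(t)^{-1}-A(\sigma)^{-1}]$ intact while arranging for the leftover (unbounded) power $A(t)^{1-\nu}$ to be absorbed by a neighbouring analytic semigroup factor, and it is the joint use of {\rm (A1)}, {\rm (A2)}, {\rm (A3)} together with the constraint $\mu+\nu>1$ that makes every resulting singular integral convergent. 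Since the statement is quoted from \cite{yagi0,yagi}, the details of this bookkeeping may simply be taken from there.
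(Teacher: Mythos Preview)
The paper does not prove this theorem at all; it is stated as a known result and attributed to \cite{yagi0,yagi} with the preamble ``According to \cite{yagi0,yagi}, the following results are known.'' Your proposal correctly identifies this and sketches the Tanabe--Sobolevskii/Yagi construction that underlies those references, so your approach is in line with the paper's treatment (namely, defer to the cited sources).
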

\begin{theorem} \label{maximal regularity}
Let {\rm (A1)}, {\rm (A2)}, {\rm (A3)} and  {\rm (F)} be satisfied.  
Let 
$\xi $ be any value in $ \mathcal D(A(0)^\beta).$ Then, there exists a unique  solution $X$ to  \eqref{E1*} in the function space: 
\begin{equation*}
  X \in \mathcal C^1((0,T]; E), \quad  AX \in \mathcal C((0,T]; E), \quad A^\beta X \in C([0,T]; E).
\end{equation*}
Furthermore, the solution $X$ has the regularity
\begin{equation*}
  \frac{dX}{dt}, AX \in \mathcal F^{\beta,\sigma}((0,T]; E)
\end{equation*}
together with the estimate
$$\Big|\Big|\frac{dX}{dt}\Big|\Big|_{\mathcal F^{\beta,\sigma}}+\|A^\beta X\|_{\mathcal C}+\|AX\|_{\mathcal F^{\beta,\sigma}}\leq C[\|A(0)^\beta \xi\|+\|F\|_{\mathcal F^{\beta,\sigma}}],$$
where $C>0$ is some constant.  
\end{theorem}

As a matter of fact, the solution $X$ is given by the formula
\begin{equation*}
X(t)=U(t,0) \xi+\int_0^t U(t,s)F(s)ds,\hspace{1cm} 0\leq t\leq T.
\end{equation*}

\section{Main results}   \label{section3}
Let us restate the problem we are considering in this paper. 
We  consider the Cauchy problem for a     stochastic evolution equation
\begin{equation} \label{E1}
\begin{cases}
dX+A(t)Xdt=F(t)dt+ G(t)dw_t,\hspace{1cm} 0<t\leq T,\\
X(0)=\xi
\end{cases}
\end{equation}
in a  complex separable M-type 2 Banach space $E$  with  norm $\|\cdot\|$ and the Borel $\sigma$\,{-}\,field  $\mathcal B(E)$. 
Here, 
\begin{itemize}
  \item [\rm{(i)}] $A(t), 0\leq t\leq T,$ are densely defined, closed linear operators  in $E$ satisfying {\rm (A1)}, {\rm (A2)} and {\rm (A3)}.
  \item [\rm{(ii)}] $\{w_t, t\geq 0\}$ denotes  a $\mathbb C^d$\,{-}\,valued  Brownian motion  on a  complete probability space  $(\Omega, \mathcal F,\mathbb P)$ with  filtration $\{\mathcal F_t\}_{t\geq 0}$ satisfying the usual conditions.
  \item  [\rm{(iii)}] $F$ is an $E$\,{-}\,valued progressively measurable  process  on  $[0,T]$ and satisfies {\rm (F)} a.s.  
  \item  [\rm{(iv)}] $G\in \mathcal N^2(0,T)$, where $\mathcal N^2(0,T)$ is defined by Definition \ref{Def1}.
  \item  [\rm{(v)}] $ \xi$ is an $E$\,{-}\,valued $\mathcal  F_0$\,{-}\,measurable  random variable.
  \item  [\rm{(vi)}] $X$ is an unknown $E$\,{-}\,valued process on $[0,T]$. 
\end{itemize}

Let us introduce a definition of strict solutions to   \eqref{E1}. 
\begin{definition} \label{Def2}
An $E$\,{-}\,valued  adapted continuous  process $X$ on $ [0,T]$ is called a  strict solution  of \eqref{E1} if 
  $$X(t) \in \mathcal D(A(t))\hspace{1cm}  \text{  a.s.,} \quad  0< t\leq T,$$
$$\int_0^T \|A(s)X(s)\| ds<\infty \hspace{1cm}  \text{  a.s.,}$$
and 
$$
X(t)=\xi -\int_0^t A(s)X(s)ds+\int_0^t F(s)ds+ \int_0^t G(s)dw_s \hspace{0.6cm} \text{a.s.,} \quad 0< t\leq T.
$$
A strict solution $X$ on $[0,T]$ is said to be unique if any other strict solution $\bar X$ on $[0,T]$ is indistinguishable from it, which means that 
$$\mathbb P\{X(t)=\bar X(t) \text{ for every } 0\leq t\leq T\}=1.$$
\end{definition}

\subsection{Uniqueness of strict  solutions}
In this subsection, we  prove uniqueness of strict  solutions to  \eqref{E1}.

\begin{theorem}   \label{uniqueness}
If there exists a strict solution to  \eqref{E1}, then it is unique. 
\end{theorem}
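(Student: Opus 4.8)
The plan is to reduce the statement to a deterministic uniqueness assertion for the homogeneous equation. Suppose $X$ and $\bar X$ are both strict solutions of \eqref{E1} and set $Y=X-\bar X$. Subtracting the two integral identities of Definition \ref{Def2}, the terms $\xi$, $\int_0^t F(s)\,ds$ and $\int_0^t G(s)\,dw_s$ cancel, and since $A(s)$ is linear while $\int_0^t\|A(s)X(s)\|\,ds$ and $\int_0^t\|A(s)\bar X(s)\|\,ds$ are a.s.\ finite, we obtain, with probability one,
\[
Y(0)=0,\qquad Y(t)=-\int_0^t A(s)Y(s)\,ds\quad(0<t\le T),
\]
where $Y$ has continuous trajectories, $Y(s)\in\mathcal D(A(s))$ for a.e.\ $s$, and $\int_0^T\|A(s)Y(s)\|\,ds<\infty$. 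Using that both sides are continuous in $t$, there is a single set $\Omega_0$ with $\mathbb P(\Omega_0)=1$ on which, for every $\omega\in\Omega_0$, the path $y:=Y(\cdot,\omega)$ is an integral solution of the homogeneous version of \eqref{E1*} (with $F\equiv0$, $\xi=0$). Since $t\mapsto Y(t)$ is continuous, it then suffices to prove that $y\equiv0$ on $[0,T]$ for each such $\omega$; this gives $\mathbb P\{X(t)=\bar X(t)\text{ for all }t\in[0,T]\}=1$, i.e.\ indistinguishability.

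For the deterministic step, fix $\omega\in\Omega_0$ and $t_0\in(0,T]$. I would show that $s\mapsto U(t_0,s)y(s)$ is constant on $(0,t_0]$ and then pass to the limit $s\downarrow0$. Formally, using the standard differentiation rule $\partial_s U(t_0,s)x=U(t_0,s)A(s)x$ for $x\in\mathcal D(A(s))$ (see \cite{yagi}) together with $\partial_s y(s)=-A(s)y(s)$,
\[
\frac{d}{ds}\bigl[U(t_0,s)y(s)\bigr]=U(t_0,s)A(s)y(s)-U(t_0,s)A(s)y(s)=0,
\]
so $U(t_0,s)y(s)=U(t_0,t_0)y(t_0)=y(t_0)$ for every $0<s\le t_0$. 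Letting $s\downarrow0$, using $\|U(t_0,s)\|\le\iota_0$ (from \eqref{E4} with $\theta=0$) and the continuity of $y$ at $0$ with $y(0)=0$, we get $y(t_0)=\lim_{s\downarrow0}U(t_0,s)y(s)=0$. Since $t_0$ is arbitrary and $y(0)=0$, this yields $y\equiv0$.

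The main obstacle is to justify the above differentiation rigorously: a strict solution carries only $A(\cdot)y(\cdot)\in L^1(0,T;E)$, so $y$ is merely absolutely continuous and not $C^1$, and under {\rm (A2)} the domains $\mathcal D(A(s))$ vary with $s$, so $\partial_s U(t_0,s)x=U(t_0,s)A(s)x$ cannot be applied directly with $x=y(s)$ along a moving family of domains. To get around this I would fix $0<s_1<s_2\le t_0$ and, using $y(s_2)-y(s_1)=-\int_{s_1}^{s_2}A(r)y(r)\,dr$, write
\[
U(t_0,s_2)y(s_2)-U(t_0,s_1)y(s_1)=\bigl[U(t_0,s_2)-U(t_0,s_1)\bigr]y(s_1)-\int_{s_1}^{s_2}U(t_0,s_2)A(r)y(r)\,dr .
\]
The last integral is controlled by $\iota_0\int_{s_1}^{s_2}\|A(r)y(r)\|\,dr$, while the first term is handled by the differentiability of $s\mapsto U(t_0,s)y(s_1)$ on $[s_1,t_0]$ together with the smoothing estimates \eqref{E4}, \eqref{E8}, \eqref{E9} of the evolution operator from \cite{yagi0,yagi}; this shows that $s\mapsto U(t_0,s)y(s)$ is absolutely continuous on $(0,t_0]$ with vanishing derivative, hence constant, and the limit $s\downarrow0$ finishes the proof. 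Alternatively, one may first use \eqref{E4}--\eqref{E9} to upgrade the regularity of $y$ on each $[s_1,t_0]$ to $A(\cdot)y(\cdot)\in\mathcal F^{\beta,\sigma}((s_1,t_0];E)$ and then invoke the uniqueness part of Theorem \ref{maximal regularity} on $[s_1,t_0]$, with zero forcing and initial value $y(s_1)\in\mathcal D(A(s_1))\subset\mathcal D(A(s_1)^\beta)$, to identify $y(t)=U(t,s_1)y(s_1)$, again reducing the problem to the endpoint limit $s_1\downarrow0$.
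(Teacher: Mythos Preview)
Your overall strategy coincides with the paper's: reduce to the pathwise homogeneous problem $Y(t)=-\int_0^t A(s)Y(s)\,ds$, show $Y(t)=U(t,\epsilon)Y(\epsilon)$ for $0<\epsilon<t$, and then let $\epsilon\downarrow0$ using $\|U(t,\epsilon)\|\le\iota_0$ and $Y(0)=0$. The difference lies only in how the identity $Y(t)=U(t,\epsilon)Y(\epsilon)$ is justified.

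The paper resolves the obstacle you correctly identified by passing to the Yosida approximations $A_n(r)=A(r)[1+n^{-1}A(r)]^{-1}$ and the associated evolution operators $U_n(t,s)$. Since each $A_n(r)$ is bounded, $\partial_r U_n(t,r)=U_n(t,r)A_n(r)$ holds on all of $E$, so the product rule applies to $U_n(t,r)Y(r)$ with $Y$ merely absolutely continuous, yielding
\[
Y(t)=U_n(t,\epsilon)Y(\epsilon)+\int_\epsilon^t U_n(t,r)\bigl[A_n(r)A(r)^{-1}-I\bigr]A(r)Y(r)\,dr.
\]
The error integral tends to $0$ as $n\to\infty$ because $A_n(r)A(r)^{-1}\to I$ strongly and $A(\cdot)Y(\cdot)\in L^1(\epsilon,t;E)$, which gives $Y(t)=U(t,\epsilon)Y(\epsilon)$.

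Your direct route, as written, has a gap precisely where you flag the difficulty. To handle $[U(t_0,s_2)-U(t_0,s_1)]y(s_1)$ you invoke differentiability of $s\mapsto U(t_0,s)y(s_1)$ on $[s_1,t_0]$, but the formula $\partial_s U(t_0,s)x=U(t_0,s)A(s)x$ requires $x\in\mathcal D(A(s))$, and under {\rm (A2)} with $\nu<1$ one only has $\mathcal D(A(s_1))\subset\mathcal D(A(s)^\nu)$, so $y(s_1)$ need not belong to $\mathcal D(A(s))$ for $s\ne s_1$. The smoothing estimates \eqref{E4}--\eqref{E9} act forward in $t$ and do not supply this membership. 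Your alternative via Theorem~\ref{maximal regularity} is also incomplete: the uniqueness there is stated within the maximal regularity class $AX\in\mathcal F^{\beta,\sigma}$, whereas a strict solution carries only $A(\cdot)Y(\cdot)\in L^1$; upgrading $Y$ to $\mathcal F^{\beta,\sigma}$ on $[s_1,t_0]$ before identifying it with $U(\cdot,s_1)Y(s_1)$ would itself require the uniqueness you are trying to prove. The Yosida approximation is exactly the device that sidesteps both issues, and it is what the paper uses.
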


\begin{proof}
Let $X$ and $\bar X$  be two strict solutions of \eqref{E1}. Put $Y(t)=X(t)-\bar X(t)$. From the definition of strict solutions, we have
$$
\begin{cases}
Y(t)=-\int_0^t A(s)Y(s)ds \hspace{1cm} \text{a.s.,} \quad 0<t\leq T,\\
Y(0)=0.
\end{cases}
$$

For each $n=1,2,3,\dots,$ let  $U_n(t,s), 0\leq s\leq t\leq T$, be the evolution operator for the family of Yosida approximations $A_n(t)$ of $A(t)$'s. It is known that (see \cite{yagi})
$$\frac{\partial U_n(t,s)}{\partial s} =U_n(t,s) A_n(s).$$
Then,
\begin{align*}
 \frac{\partial }{\partial r}U_n(t,r)Y(r)&=U_n(t,r) A_n(r)Y(r)-U_n(t,r)A(r)Y(r) \\
& =U_n(t,r)[A_n(r)A(r)^{-1}-I]  A(r)Y(r)  \hspace{1cm} \text{ a.s.}
\end{align*}
Let $0<\epsilon\leq T$.  Integrating this on $[\epsilon,T]$ yields that 
\begin{align*}
Y(t)-U_n(t,\epsilon) Y(\epsilon)=\int_\epsilon^t &U_n(t,r)[A_n(r)A(r)^{-1}-I]   A(r)Y(r) dr \\
& \text{ a.s.,  }  0<\epsilon<t\leq T.
\end{align*}
 Letting $n\to \infty$, it follows that 
$$Y(t)=U(t,\epsilon) Y(\epsilon) \hspace{1cm} \text{ a.s.,  }    0<\epsilon<t\leq T.$$
Letting $\epsilon\to 0$, we arrive at
 $$Y(t)=0     \hspace{1cm} \text{ a.s.},   0< t\leq T.$$
Thus, $X\equiv \bar X $ a.s. on $[0,T].$  By the continuity of $X$ and $ \bar X $ on $[0,T],$
they are indistinguishable.
\end{proof}

\subsection{Existence of strict solutions} 
In this subsection, we  construct strict solutions to \eqref{E1} based on solution formula. 
We  assume that the process $G$   satisfies the condition:
 \begin{itemize}
    \item  [(G)] There exist  $\delta >\frac{1}{2}  $  and a square integrable random variable $\zeta$ such that 
    $$G(t) \in \mathcal D(A(t)^\delta)  \hspace{1cm}  \text{ a.s.,}\quad 0\leq  t\leq T$$
and
   $$\|A(t)^\delta G(t)-A(s)^\delta G(s)\|_{L(\mathbb C^d;E)} \leq \zeta (t-s)^\sigma  \hspace{0.5cm}  \text{ a.s.,     } 0\leq s\leq t\leq T.$$
In addition,  $ \mathbb E  \|A(0)^\delta G(0)\|_{L(\mathbb C^d;E)}^2<\infty.$
\end{itemize}
\begin{theorem} \label{strictSolutions}
 Let {\rm (A1)}, {\rm (A2)}, {\rm (A3)}, {\rm (F)}  and
 {\rm (G)} be satisfied. Assume that $\xi \in \mathcal D(A(0)^\beta) $ a.s.  and  $\mathbb E \|A(0)^\beta \xi\|^2<\infty$. 
 Then, there exists a unique strict solution of \eqref{E1} possessing the regularities
$$ A^{\beta}X\in \mathcal C([0,T];E), \quad X\in  \mathcal C^{\gamma_1}([0,T];E) \hspace{1cm} \text{ a.s.,}$$
and 
$$ AX\in \mathcal C^{\gamma_2}([\epsilon,T];E) \hspace{1cm} \text{ a.s.}$$
 for any $  0<\gamma_1\leq \min\{\beta,\frac{1}{2}\},$  $\gamma_1\ne \frac{1}{2},$ $ 0<\gamma_2<\min\{\delta-\frac{1}{2},\sigma\}$ and $0< \epsilon \leq T$. In addition, $X$ satisfies the estimate
\begin{align}  
\mathbb E & \|A(t)^\beta X(t)\|^2   \label{E13}\\ 
\leq &C[\mathbb E\|A(0)^\beta \xi\|^2+\mathbb E\|F\|_{\mathcal F^{\beta,\sigma}}^2   +\mathbb E\|A(0)^\delta G(0)\|_{L(\mathbb C^d;E)}^2   
t^{1-2(\beta-\delta)}   \notag\\
&  +t^{1-2(\beta-\delta)+2\sigma}], \hspace{1cm} 0\leq t\leq T \notag
\end{align}
when $\beta\geq \delta,$ 
and the estimate
\begin{align} 
\mathbb E  \|A(t)^\beta X(t)\|^2  
\leq & C[\mathbb E\|A(0)^\beta \xi\|^2+\mathbb E\|F\|_{\mathcal F^{\beta,\sigma}}^2        
       \label{E13.1}\\
&+\mathbb E\|A(0)^\delta G(0)\|_{L(\mathbb C^d;E)}^2 t+t^{1+2\sigma}], \hspace{1cm} 0\leq t\leq T \notag
\end{align}
when $\beta< \delta,$  where $C>0$ is some constant depending on the exponents. 
Furthermore, 
if $G(0)=0$, then  for any  $ 0<\gamma_1\leq \min\{\frac{1+\sigma}{2}, \delta,$ $\beta\}, $  $\gamma_1\notin \{ \frac{1+\sigma}{2},\delta\},$  
  $$X\in \mathcal C^{\gamma_1}([0,T];E) \hspace{1cm} \text{ a.s.}$$
\end{theorem}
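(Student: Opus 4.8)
The plan is to construct the candidate solution explicitly from the variation-of-constants formula
$$X(t)=U(t,0)\xi+\int_0^t U(t,s)F(s)\,ds+\int_0^t U(t,s)G(s)\,dw_s,\qquad 0\le t\le T,$$
to verify that this process is a strict solution with the asserted regularity, and to deduce uniqueness from Theorem \ref{uniqueness}. For each fixed $\omega$ the sum of the first two terms is exactly the solution produced by Theorem \ref{maximal regularity} (since $F(\cdot,\omega)\in\mathcal F^{\beta,\sigma}((0,T];E)$ and $\xi(\omega)\in\mathcal D(A(0)^\beta)$ a.s.), so it already carries the deterministic regularity and the deterministic estimate of that theorem, and measurability in $\omega$ is inherited from that of $\xi$ and $F$. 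The whole analysis thus reduces to the stochastic convolution $W_A(t):=\int_0^t U(t,s)G(s)\,dw_s$.

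The key structural step is to show that $W_A(t)\in\mathcal D(A(t))$ a.s.\ and that $A(t)$ can be pulled under the integral. I would write
$$A(t)U(t,s)G(s)=\bigl[A(t)U(t,s)A(s)^{-\delta}\bigr]\,\bigl[A(s)^\delta G(s)\bigr]$$
and bound the first factor by \eqref{E8} with $\theta_2=1$, $\theta_1=1-\delta$, which gives $\|A(t)U(t,s)A(s)^{-\delta}\|\le\kappa_1(t-s)^{\delta-1}$ when $\delta\le1$ (and a bounded factor when $\delta>1$); since $\delta>\tfrac12$ the weight $(t-s)^{2(\delta-1)}$ is integrable near $s=t$, while (G1) gives $\sup_{0\le s\le T}\mathbb E\|A(s)^\delta G(s)\|_{L(\mathbb C^d;E)}^2<\infty$. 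Hence $A(t)U(t,\cdot)G(\cdot)$ is a square-integrable predictable integrand on $[0,t]$, and Theorem \ref{IntegralInequality}(i) together with Proposition \ref{pro1} (applied with the fixed closed operator $B=A(t)$) yield
$$A(t)W_A(t)=\int_0^t A(t)U(t,s)A(s)^{-\delta}A(s)^\delta G(s)\,dw_s,\qquad \mathbb E\|A(t)W_A(t)\|^2\le C\int_0^t(t-s)^{2(\delta-1)}\,ds<\infty,$$
so, adding the deterministic part, $\int_0^t\|A(s)X(s)\|\,ds<\infty$ a.s. The integral identity of Definition \ref{Def2} then follows from the evolution-operator identities for $U(t,s)$ by a stochastic Fubini argument, justified through Theorem \ref{IntegralInequality}(i) and Proposition \ref{pro1}, exactly as in the deterministic derivation underlying Theorem \ref{maximal regularity}; predictability and continuity of $X$ come from predictability of $U(\cdot,\cdot)G(\cdot)$, adaptedness of the stochastic integral, and the trajectory continuity obtained in the next step. \emph{This is where $\delta>\tfrac12$ is indispensable, and it is the main obstacle:} the singularity $(t-s)^{-1}$ of $A(t)U(t,s)$ is not square-integrable, so one can never apply $A(t)$ under the stochastic integral on its own — every such manipulation must be routed through the smoothing factor $A(s)^{-\delta}$.

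For the regularity and the estimates I would split, for $0\le\tau<t\le T$,
$$W_A(t)-W_A(\tau)=\int_0^\tau\bigl[U(t,s)-U(\tau,s)\bigr]G(s)\,dw_s+\int_\tau^t U(t,s)G(s)\,dw_s,$$
insert $A(s)^{-\delta}A(s)^\delta$ in each integrand, and estimate $\|A(t)^\theta[U(t,s)-U(\tau,s)]A(s)^{-\delta}\|$ using \eqref{E4}, \eqref{E8} and \eqref{E9}: the near-diagonal integral has $L^2$-norm of order $(t-\tau)^{1/2}$ for $W_A$ itself and of order $(t-\tau)^{\delta-1/2}$ for $A(\cdot)W_A$ (because of the factor $(t-s)^{\delta-1}$), while the difference integral contributes an extra factor $(t-\tau)^\sigma$ (or $(t-\tau)^{\mu+\nu-1}$, harmless since $\sigma<\mu+\nu-1$). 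Raising to the $p$-th moment via Theorem \ref{IntegralInequality}(iii) and invoking the Kolmogorov criterion (Theorem \ref{Kol1}) gives $W_A\in\mathcal C^{\gamma_1}([0,T];E)$ for $\gamma_1<\min\{\beta,\tfrac12\}$ (the exponent $\beta$ being the binding one from the deterministic terms), $A(\cdot)W_A\in\mathcal C^{\gamma_2}([\epsilon,T];E)$ for $\gamma_2<\min\{\delta-\tfrac12,\sigma\}$, and continuity of $A(t)^\beta W_A(t)$ from \eqref{E8} with $\theta_2=\beta$. The inequalities \eqref{E13} and \eqref{E13.1} come from taking $p=2$: when $\beta\ge\delta$ one estimates $A(t)^\beta U(t,s)A(s)^{-\delta}$ through \eqref{E8}, which produces the singularity $(t-s)^{-(\beta-\delta)}$, integrable because $\beta-\delta<\tfrac12$; when $\beta<\delta$ one splits off the bounded factor $A(s)^{\beta-\delta}$ via \eqref{E6} — this is the origin of the case distinction — and in both cases one adds the deterministic bound of Theorem \ref{maximal regularity} and uses $\mathbb E\|A(s)^\delta G(s)\|_{L(\mathbb C^d;E)}^2\le C(\mathbb E\|A(0)^\delta G(0)\|_{L(\mathbb C^d;E)}^2+s^{2\sigma})$. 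Finally, when $A(0)^\delta G(0)=0$ the same (G1) forces $\|A(s)^\delta G(s)\|_{L(\mathbb C^d;E)}\le\zeta s^\sigma$, which suppresses the contribution of the stochastic convolution near $s=0$; feeding this back into the Kolmogorov estimate sharpens the admissible Hölder exponent of $X$ to any $\gamma_1<\min\{\tfrac{1+\sigma}{2},\delta,\beta\}$.

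Beyond the non-integrable singularity already highlighted, the remaining difficulty is purely bookkeeping: one must simultaneously track the time weight $s^{1-\beta+\sigma}$ carried by $F\in\mathcal F^{\beta,\sigma}$ in the deterministic part, the various singularities $(t-s)^{-\theta}$ of the evolution operator, and the fact that only the second moment of $\zeta$ is controlled, and do so uniformly enough to obtain the precise ranges of $\gamma_1,\gamma_2$ and to justify the identity of Definition \ref{Def2}. Apart from that, everything is a careful but routine application of \eqref{E4}--\eqref{E9}, Theorem \ref{IntegralInequality}, Proposition \ref{pro1} and the Kolmogorov criterion.
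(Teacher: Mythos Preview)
Your proposal is correct and follows essentially the same route as the paper: the candidate solution is the variation-of-constants expression, the deterministic part $I_1(t)=U(t,0)\xi+\int_0^t U(t,s)F(s)\,ds$ is handled entirely by Theorem \ref{maximal regularity}, and the stochastic convolution $W_A$ is analysed by inserting the factor $A(s)^{-\delta}A(s)^\delta$, invoking \eqref{E8}, \eqref{E9}, Proposition \ref{pro1} and a stochastic Fubini step to verify the identity of Definition \ref{Def2}.

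There is one tactical difference worth noting. For the H\"older regularity of $W_A$ and $A(\cdot)W_A$ you propose to raise to the $p$-th moment via the Burkholder--Davis--Gundy inequality (Theorem \ref{IntegralInequality}(iii)) and then apply the general Kolmogorov criterion (Theorem \ref{Kol1}). The paper instead observes that the stochastic convolutions $W_\theta(t)=\int_0^t A(t)^\theta U(t,s)G(s)\,dw_s$ are Gaussian processes and therefore applies Theorem \ref{Kol2}, which needs only the second-moment increment bound $\mathbb E\|W_\theta(t)-W_\theta(s)\|^2\le C(t-s)^{2\gamma_3}$. This lets the paper work exclusively with second moments and avoids the BDG inequality altogether, so the computations are a bit shorter; your approach, on the other hand, does not rely on Gaussianity of the integrand and would go through verbatim for genuinely random $G$. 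The specific three-term decomposition the paper uses for $A(t)W_A(t)-A(s)W_A(s)$ (their $J_1+J_2+J_3$, separating out the H\"older increment of $A(\cdot)^\delta G(\cdot)$) is exactly the refinement of your two-term split that is needed to extract the exponent $\min\{\delta-\tfrac12,\sigma\}$.
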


\begin{proof}
Uniqueness of strict solutions has already  been verified by Theorem 
\ref{uniqueness}. It suffices to construct a strict solution based on the solution formula.  We divide the proof into several steps. Throughout the proof, we denote by $C$ some universal constant, which depends on the exponents.

{\bf Step 1}. For  $0\leq \theta\leq 1,$ put
$$W_\theta(t)=\int_0^t A(t)^\theta U(t,s)G(s)dw_s.$$
Let us verify that   $W_\theta$ is well-defined on $[0,T]$ and satisfies
$$W_\theta(t)=A(t)^\theta \int_0^t U(t,s)G(s)dw_s.$$

From {\rm (G)}, it follows that  
\begin{equation}  \label{E14}
\|A(t)^\delta G(t)\|_{L(\mathbb C^d;E)}\leq \|A(0)^\delta G(0)\|_{L(\mathbb C^d;E)} +\zeta  t^\sigma, \hspace{1cm} 0\leq t\leq T. 
\end{equation}
Therefore,
\begin{align*}
&\int_0^t \mathbb E \|A(t)^\theta U(t,s)G(s)\|_{L(\mathbb C^d;E)}^2ds\\
&=\int_0^t \mathbb E \|A(t)^\theta U(t,s)A(s)^{-\delta} A(s)^\delta G(s)\|_{L(\mathbb C^d;E)}^2ds\\
&\leq \int_0^t \|A(t)^\theta U(t,s)A(s)^{-\delta}\|^2  \mathbb E\|A(s)^\delta G(s)\|_{L(\mathbb C^d;E)}^2ds\\
&\leq 2 \int_0^t \|A(t)^\theta U(t,s)A(s)^{-\delta}\|^2 [ \mathbb E\|A(0)^\delta G(0)\|_{L(\mathbb C^d;E)}^2 + \mathbb E\zeta^2  s^{2\sigma}] ds.
\end{align*}
If $\theta\geq \delta$, then  \eqref{E8} gives
\begin{align}
\int_0^t & \mathbb E \|A(t)^\theta U(t,s)G(s)\|_{L(\mathbb C^d;E)}^2ds \notag\\
\leq &2 \int_0^t \|A(t)^\theta U(t,s)A(s)^{\theta-\delta-\theta}\|^2 [ \mathbb E\|A(0)^\delta G(0)\|_{L(\mathbb C^d;E)}^2 + \mathbb E\zeta^2  s^{2\sigma}] ds \notag\\
\leq & 2 \kappa_\theta^2\int_0^t (t-s)^{-2(\theta-\delta)}  [ \mathbb E\|A(0)^\delta G(0)\|_{L(\mathbb C^d;E)}^2  + \mathbb E\zeta^2  s^{2\sigma}]  ds \notag\\
= &\frac{2\kappa_\theta^2 \mathbb E\|A(0)^\delta G(0)\|_{L(\mathbb C^d;E)}^2 t^{1-2(\theta-\delta)}}{1-2(\theta-\delta)} \label{E14.1} \\
&+2 \mathbb E\zeta^2 \kappa_\theta^2  B(1+2\sigma,1-2(\theta-\delta))  t^{1-2(\theta-\delta)+2\sigma}  \notag \\
\leq &C,  \hspace{1cm} 0\leq t\leq T, \notag
\end{align}
where $ B(\cdot,\cdot)$ is the beta function. Meanwhile, if $\theta< \delta$, then  \eqref{E8} and  \eqref{E6} give
\begin{align}
&\int_0^t  \mathbb E\|A(t)^\theta U(t,s)G(s)\|_{L(\mathbb C^d;E)}^2ds \notag\\
&\leq  2 \int_0^t \|A(t)^\theta U(t,s)A(s)^{-\theta}\|^2   \notag\\
& \hspace{1cm} \times \|A(s)^{\theta-\delta}\|^2 [ \mathbb E \|A(0)^\delta G(0)\|_{L(\mathbb C^d;E)}^2 + \mathbb E \zeta^2  s^{2\sigma}]  ds \notag\\
&\leq  2 \kappa_\theta^2 \iota_{\delta-\theta}^2  \int_0^t   [ \mathbb E \|A(0)^\delta G(0)\|_{L(\mathbb C^d;E)}^2 + \mathbb E \zeta^2  s^{2\sigma}] ds \notag\\
&= 2\kappa_\theta^2 \iota_{\delta-\theta}^2  \Big [\mathbb E\|A(0)^\delta G(0)\|_{L(\mathbb C^d;E)}^2 t+\frac{\mathbb E\zeta^2  t^{1+2\sigma}}{1+2\sigma }\Big]   \label{E14.2}\\
&\leq C, \hspace{1cm}    0\leq t\leq T.    \notag
\end{align}
Hence,  $W_\theta$ is well-defined  on $[0,T].$ Since $A(t)^\theta$ is closed, Proposition  \ref{pro1} yields that
$$W_\theta(t)=A(t)^\theta \int_0^t U(t,s)G(s)dw_s, \hspace{1cm}  0\leq t\leq T.$$

{\bf Step 2}. Fix $ 0<\gamma_2<\min\{\delta-\frac{1}{2},\sigma\}.$ 
  For $\theta=1$, let us verify that 
$$ W_1\in  \mathcal C^{\gamma_2}([0,T];E) \hspace{1cm} \text{ a.s.}$$
by using Theorem \ref{Kol2}.

 Let $0<\gamma_2<\gamma_3<\min\{\delta-\frac{1}{2},\sigma\}.$ 
We  prove that
   \begin{align}
\mathbb E\|W_1(t)-W_1(s)\|^2 \leq & C (t-s)^{2\gamma_3}, \hspace{1cm} 0\leq s\leq t\leq T.  \label{E14.3}
\end{align}
 Indeed, noting the expression 
\begin{align*}
W_1(t)=&\int_0^t A(t)U(t,r)A(r)^{-\delta} A(r)^\delta G(r)dw_r,
\end{align*}
we have
\begin{align*}
&W_1(t)-W_1(s)\\
=&\int_s^t A(t)U(t,r)A(r)^{-\delta}A(r)^\delta G(r)dw_r\\
&+\int_0^sA(t)U(t,r)A(r)^{-\delta} A(r)^\delta G(r)dw_r\\
&-\int_0^s A(s)U(s,r)A(r)^{-\delta}A(r)^\delta G(r)dw_r\\
=&\int_s^t A(t)U(t,r)A(r)^{-\delta}[A(r)^\delta G(r)-A(t)^\delta G(t)+A(t)^\delta G(t)]dw_r\\
&+\int_0^s [A(t)U(t,s)-A(s)]U(s,r)A(r)^{-\delta} A(r)^\delta G(r)dw_r\\
=&\int_s^t A(t)U(t,r)A(r)^{-\delta}[A(r)^\delta G(r)-A(t)^\delta G(t)]dw_r\\
&+\int_s^t A(t)U(t,r)A(r)^{-\delta}A(t)^\delta G(t)dw_r\\
&+\int_0^s [A(t)U(t,s)-A(s)]U(s,r)A(r)^{-\delta} A(r)^\delta G(r)dw_r\\
=&J_1+J_2+J_3, \hspace{1cm} 0\leq s \leq t \leq T.
\end{align*}
So,
$$\mathbb E\|W_1(t)-W_1(s)\|^2 \leq 3\sum_{i=1}^3 \mathbb E\|J_i\|^2, \hspace{1cm} 0\leq s \leq t \leq T.$$

The terms $\mathbb E\|J_i\|^2$ $ (i=1,2,3)$ are estimated as follows. By using \eqref{E4} and  {\rm (G)},  we have
\begin{align*}
& \mathbb E\|J_1\|^2  \\
&\leq c(E)\int_s^t \mathbb E \|A(t)U(t,r)A(r)^{-\delta}[A(r)^\delta G(r)-A(t)^\delta G(t)]\|_{L(\mathbb C^d;E)}^2dr\\
&\leq c(E)\int_s^t \|A(t)U(t,r)A(r)^{-\delta}\|^2 \mathbb E\|A(r)^\delta G(r)-A(t)^\delta G(t)\|_{L(\mathbb C^d;E)}^2dr\\
&\leq c(E)\kappa_1^2 \mathbb E\zeta^2  \int_s^t (t-r)^{2(\delta+\sigma-1)} dr\\
&\leq \frac{c(E)\kappa_1^2 \mathbb E \zeta^2 (t-s)^{2(\delta+\sigma)-1}}{2(\delta+\sigma)-1}, \hspace{1cm} 0\leq s \leq t \leq T.  \notag
\end{align*}

Similarly, we have 
\begin{align*}
&\mathbb E\|J_2\|^2 \\
& \leq  c(E)\int_s^t \mathbb E\|A(t)U(t,r)A(r)^{-\delta}A(t)^\delta G(t)\|_{L(\mathbb C^d;E)}^2dr\\
&\leq c(E) \int_s^t \|A(t)U(t,r)A(r)^{-\delta}\|^2 \mathbb E\|A(t)^\delta G(t)\|_{L(\mathbb C^d;E)}^2 dr\\
 & \leq  2c(E) \kappa_1^2   \int_s^t (t-r)^{2(\delta-1)} [\mathbb E\|A(0)^\delta G(0)\|_{L(\mathbb C^d;E)}^2+\mathbb E\zeta^2  t^{2\sigma}]dr \\
&= \frac{2c(E)\kappa_1^2 [\mathbb E\|A(0)^\delta G(0)\|_{L(\mathbb C^d;E)}^2+\mathbb E\zeta^2  t^{2\sigma}] (t-s)^{2\delta-1}}{2\delta-1}, \hspace{0.8cm} 0\leq s \leq t \leq T.
\end{align*}
And,
\begin{align*}
\mathbb E& \|J_3\|^2 \\
\leq  & c(E)\int_0^s\mathbb E \|[A(t)U(t,s)-A(s)]U(s,r)A(r)^{-\delta} A(r)^\delta G(r)\|_{L(\mathbb C^d;E)}^2dr\\
\leq &c(E)\int_0^s \|[A(t)U(t,s)A(s)^{-1}-I]A(s)U(s,r) A(r)^{-\delta}\|^2\\
& \hspace{1.5cm} \times \mathbb E \|A(r)^\delta G(r)\|_{L(\mathbb C^d;E)}^2dr\\
\leq & c(E)\int_0^s \|[A(t)U(t,s)A(s)^{-1}-e^{-(t-s)A(s)}]A(s)U(s,r)A(r)^{-\delta}\|^2\\
& \hspace{1cm} \times \mathbb E [\|A(0)^\delta G(0)\|_{L(\mathbb C^d;E)} +\zeta  r^\sigma]^2dr\\
&+c(E)\int_0^s \|[e^{-(t-s)A(s)}-I]A(s)^{-\theta}\|^2 \|A(s)^{1+\theta}  U(s,r)A(r)^{-\delta}\|^2\\
& \hspace{1cm} \times\mathbb E [\|A(0)^\delta G(0)\|_{L(\mathbb C^d;E)} +\zeta  r^\sigma]^2 dr,  \hspace{1cm} 0\leq s \leq t \leq T.
\end{align*}
Then, \eqref{E8} and \eqref{E9} imply that for any  $0<\theta <\min\{\mu+\nu-1, \delta-\frac{1}{2}\},$ 
\begin{align*}
\mathbb E& \|J_3\|^2 \\
\leq & c(E) c_{\mu,\nu}^2\kappa_1^2 \int_0^s (t-s)^{2(\mu+\nu-1)} (s-r)^{2(\delta-1)}   \\
& \hspace{1.5cm} \times  \mathbb E [\|A(0)^\delta G(0)\|_{L(\mathbb C^d;E)} +\zeta  r^\sigma]^2 dr\\
&+c(E) \iota_\theta^2 \kappa_{1+\theta} \int_0^s  (t-s)^{2\theta}(s-r)^{2(\delta-\theta-1)}  \\
& \hspace{1.5cm} \times \mathbb E[\|A(0)^\delta G(0)\|_{L(\mathbb C^d;E)} +\zeta  r^\sigma]^2  dr\\
\leq & 2c(E) c_{\mu,\nu}^2\kappa_1^2 \int_0^s (t-s)^{2(\mu+\nu-1)} (s-r)^{2(\delta-1)}\\
&\hspace{1cm} \times [\mathbb E\|A(0)^\delta G(0)\|_{L(\mathbb C^d;E)}^2+\mathbb E\zeta^2  r^{2\sigma}] dr\\
&+2c(E) \iota_\theta^2 \kappa_{1+\theta} \int_0^s  (t-s)^{2\theta}(s-r)^{2(\delta-\theta-1)}\\
&\hspace{1cm} \times  [\mathbb E\|A(0)^\delta G(0)\|_{L(\mathbb C^d;E)}^2+\mathbb E\zeta^2  r^{2\sigma}]   dr\\
= & 2c(E) c_{\mu,\nu}^2\kappa_1^2 \Big[\frac{\mathbb E\|A(0)^\delta G(0)\|_{L(\mathbb C^d;E)}^2 s^{2\delta-1}}{2\delta-1} \\
& \hspace{2cm}  +\mathbb E\zeta^2 B(1+2\sigma,2\delta-1) s^{2(\delta+\sigma)-1} \Big](t-s)^{2(\mu+\nu-1)}\\
&+2c(E) \iota_\theta^2 \kappa_{1+\theta}  \Big[\frac{\mathbb E\|A(0)^\delta G(0)\|_{L(\mathbb C^d;E)}^2s^{2(\delta-\theta)-1} }{2(\delta-\theta)-1} \\
& + \mathbb E \zeta^2 B(1+2\sigma,2(\delta-\theta)-1)s^{2(\delta-\theta+\sigma)-1}\Big](t-s)^{2\theta},  \hspace{1cm} 0\leq s\leq t\leq T.
\end{align*}
Since $\mu+\nu-1 \geq \sigma$, there exists $c_1>0$ such that 
$$\mathbb E\|J_3\|^2 \leq c_1 (t-s)^{2\gamma_3}, \hspace{1cm} 0\leq s\leq t\leq T.$$

In this way, we conclude that 
\begin{align*}
\mathbb E\|W_1(t)-W_1(s)\|^2 
\leq C (t-s)^{2\gamma_3},  \hspace{1cm} 0\leq s\leq t\leq T. 
\end{align*}

On the other hand, thanks to  Step 1, it is possible to see that 
 $  W_1$ is a Gaussian process on $[0,T]$. By  \eqref{E14.3}, 
 Theorem \ref{Kol2} applied to  $  W_1$ yields that   
$$W_1\in \mathcal C^{\gamma_2}([0,T];E) \hspace{1cm} \text{ a.s.}$$

{\bf Step 3}.  Put 
$$
I_1(t)=U(t,0)\xi +\int_0^tU(t,s) F(s)ds,  \hspace{1cm} 0\leq t\leq T,
$$
and 
$$X(t)=I_1(t)+W_0(t),  \hspace{1cm} 0\leq t\leq T.$$
Let us verify that $X$  is a strict solution of \eqref{E1}  possessing the regularity
 $$AX\in \mathcal C((0,T];E) \hspace{1cm} \text{a.s.}$$

 Due to Theorem \ref{maximal regularity},   
 $$
AI_1\in  \mathcal F^{\beta,\sigma}((0,T];E) \hspace{1cm} \text{a.s.}$$
In particular, 
$$
AI_1\in  \mathcal C((0,T];E) \hspace{1cm} \text{a.s.}$$
In addition, 
by \eqref{E12}, 
\begin{align*}
\int_0^t \|A(s)I_1(s)\|ds &\leq \int_0^t \|AI_1\|_{\mathcal F^{\beta,\sigma}} s^{\beta-1}ds\\
&=\frac{\|AI_1\|_{\mathcal F^{\beta,\sigma}} t^\beta}{\beta} <\infty, \hspace{1cm}  0< t\leq T.
\end{align*}
Since $\frac{dI_1}{dt}=-A(t)I_1(t)+F(t),$ we have 
$$
I_1(t)=\xi-\int_0^t A(s)I_1(s)ds +\int_0^t F(s)ds, \hspace{1cm}   0< t\leq T.$$

On the other hand,  by   Step 2,  
$$AW_0=W_1\in \mathcal C([0,T];E)\hspace{1cm} \text{ a.s.}  $$
Then, it suffices to prove that 
\begin{equation} \label{E15}
W_0(t)+ \int_0^t W_1(s)ds=\int_0^t G(s)dw_s, \hspace{1cm}  0\leq  t\leq T.
\end{equation}

To prove    \eqref{E15}, we want to use the Fubini theorem. Before that, we however observe from   \eqref{E6} and \eqref{E14},       that 
\begin{align*}
&\int_0^t \mathbb E\|G(s)\|_{L(\mathbb C^d;E)}^2ds\\
&\leq \int_0^t \|A(s)^{-\delta}\|^2\mathbb E \|A(s)^\delta G(s)\|_{L(\mathbb C^d;E)}^2ds\\
&\leq 2 \int_0^t \iota_\delta^2[\mathbb E \|A(0)^\delta G(0)\|_{L(\mathbb C^d;E)}^2 +\mathbb E\zeta^2  s^{2\sigma}]ds\\
&=2\iota_\delta^2 \mathbb E \|A(0)^\delta G(0)\|_{L(\mathbb C^d;E)}^2 t
+  \frac{2 \mathbb E\zeta^2  \iota_\delta^2   t^{1+2\sigma}}{1+2\sigma}\leq C, \hspace{1cm} 0\leq  t\leq T.
\end{align*}
 Thereby, the stochastic integral $\int_0^t G(s)dw_s$ is well-defined  for $ 0\leq t\leq T.$

 On the other hand,  with $\theta=1$ \eqref{E14.1} gives
\begin{align}
&\int_0^t\mathbb E\|A(t)U(t,s)G(s)\|_{L(\mathbb C^d;E)}^2ds  \label{E16}\\
\leq &\frac{ 2  \kappa_1^2\mathbb E \|A(0)^\delta G(0)\|_{L(\mathbb C^d;E)}^2 t^{2\delta-1} }{2\delta-1}
+  2\mathbb E\zeta^2 \kappa_1^2 B(1+2\sigma,2\delta-1)t^{2(\delta+\sigma) -1}  \notag\\
\leq &C, \hspace{1cm} 0\leq  t\leq T.\notag
\end{align}
 Theorem  \ref{IntegralInequality} and \eqref{E16} then yield that 
\begin{align*}
\mathbb E \Big(\int_0^t \|W_1(s)\|ds\Big)^2 \leq &t \int_0^t \mathbb E\|W_1(s)\|^2ds\\
= &t \int_0^t \mathbb E\Big|\Big|\int_0^s A(s)U(s,u)G(u)dw_u\Big|\Big|^2ds\\
\leq & c(E) t \int_0^t \int_0^s \mathbb E \|A(s)U(s,u)G(u)\|_{L(\mathbb C^d;E)}^2duds\\
\leq &
\frac{ 2c(E)   \kappa_1^2 \mathbb E\|A(0)^\delta G(0)\|_{L(\mathbb C^d;E)}^2 t \int_0^t s^{2\delta-1} ds}{2\delta-1} \\
&+ 2c(E)  \mathbb E \zeta^2 \kappa_1^2 B(1+2\sigma,2\delta-1)t \int_0^t s^{2(\delta+\sigma) -1}ds\\
= &
\frac{ 2c(E)   \kappa_1^2 \mathbb E\|A(0)^\delta G(0)\|_{L(\mathbb C^d;E)}^2 t^{2\delta+1} }{2\delta(2\delta-1)} \\
&+  \frac{2c(E) \mathbb E\zeta^2 \kappa_1^2 B(1+2\sigma,2\delta-1)t^{2(\delta+\sigma) +1}}{2(\delta+\sigma) } \\
\leq &C, \hspace{1cm}  0\leq  t\leq T.
\end{align*}
Hence, for  $0\leq  t\leq T$, the integral  $\int_0^t\|W_1(s)\|ds$ is well-defined a.s. 

It is now  ready to use  the Fubini theorem. We have  
\begin{align*}
 \int_0^t W_1(s)ds&=\int_0^t \int_0^s A(s)U(s,u) G(u)dw_uds\\
&=\int_0^t \int_u^t A(s)U(s,u) G(u)dsdw_u\\
&=\int_0^t [G(u)-U(t,u)G(u)]dw_u\\
&=\int_0^t G(u)dw_u-\int_0^t U(t,u)G(u)dw_u\\
&=\int_0^t G(u)dw_u-W_0(t),  \hspace{1cm}  0\leq  t\leq T.
\end{align*}
Thus,  \eqref{E15}  has been verified.

{\bf Step 4}. Let us verify  that  $A^\beta X\in \mathcal C([0,T];E)$ a.s., and that $X$ satisfies  \eqref{E13} and \eqref{E13.1}.

 We see that 
$$A(t)^\beta X(t)=A(t)^{\beta-1} A(t) X(t), \hspace{1cm} 0<t\leq T.$$
Since $A(\cdot)^{\beta-1}\in \mathcal C([0,T];L(E)),$ the continuity of $A^\beta X$ on $(0,T]$  follows  Step 3. 
In addition, thanks to Theorem \ref{maximal regularity} and Step 2, $A^\beta I_1$ and  $A^\beta W_0$ are  continuous at $t=0$. Therefore, $A^\beta X\in \mathcal C([0,T];E)$  a.s.

On the other hand, due to  \eqref{E14.1} and  \eqref{E14.2}, we observe that 
when
$\beta\geq \delta$,
\begin{align*}
\int_0^t & \mathbb E \|A(t)^\beta U(t,s)G(s)\|_{L(\mathbb C^d;E)}^2ds \notag\\
\leq  &\frac{2\kappa_\beta^2 \mathbb E\|A(0)^\delta G(0)\|_{L(\mathbb C^d;E)}^2 t^{1-2(\beta-\delta)}}{1-2(\beta-\delta)}  \notag\\
&+2 \mathbb E\zeta^2 \kappa_\beta^2  B(1+2\sigma,1-2(\beta-\delta))  t^{1-2(\beta-\delta)+2\sigma}   \\
\leq &C,  \hspace{1cm} 0\leq  t\leq T. \notag
\end{align*}
Meanwhile, when $\beta< \delta$,
\begin{align*}
&\int_0^t  \mathbb E\|A(t)^\beta U(t,s)G(s)\|_{L(\mathbb C^d;E)}^2ds \notag\\
&\leq   2\kappa_\beta^2 \iota_{\delta-\beta}^2  \Big [\mathbb E\|A(0)^\delta G(0)\|_{L(\mathbb C^d;E)}^2 t+\frac{\mathbb E\zeta^2  t^{1+2\sigma}}{1+2\sigma }\Big]  \notag\\
& \leq C, \hspace{1cm} 0\leq  t\leq T. 
\end{align*}
Since
$$\mathbb E \|A(t)^\beta X(t)\|^2 \leq 2 \mathbb E \|A(t)^\beta I_1(t)\|^2 +2 \mathbb E \|A(t)^\beta I_2(t)\|^2$$
and 
$$\mathbb E \|A(t)^\beta I_2(t)\|^2=\int_0^t  \mathbb E\|A(t)^\beta U(t,s)G(s)\|_{L(\mathbb C^d;E)}^2ds,$$
\eqref{E13} and \eqref{E13.1} follow from the above estimates and those of  Theorem \ref{maximal regularity}.

 {\bf Step 5}. Let us show that  for any $ 0<\gamma_1\leq \min\{\beta,\frac{1}{2}\}$ and  $\gamma_1\ne \frac{1}{2}$, 
 $$X\in \mathcal C^{\gamma_1}([0,T];E) \hspace{1cm} \text{ a.s.,}$$
 and that, if $ G(0)=0$, then  for any  $ 0<\gamma_1\leq \min\{\frac{1+\sigma}{2}, \delta, \beta\}, $  $\gamma_1\notin \{ \frac{1+\sigma}{2},\delta\},$  
  $$X\in \mathcal C^{\gamma_1}([0,T];E) \hspace{1cm} \text{ a.s.} $$

First, we  prove  that $I_1$ is $\beta$\,{-}\,H\"{o}lder continuous a.s. on $[0,T]$. According to Theorem \ref{maximal regularity},  $\frac{dI_1}{dt} \in \mathcal F^{\beta,\sigma}((0,T];E)$ a.s.  Then,  \eqref{E12} gives  that
\begin{align*}
\|I_1(t)-I_1(s)\|&=\Big|\Big|\int_s^t \frac{dI_1(u)}{du}du\Big|\Big| \leq \int_s^t  \Big|\Big|\frac{dI_1(u)}{du}\Big|\Big| du\notag\\
&\leq \int_s^t  \Big|\Big|\frac{dI_1}{du}\Big|\Big|_{\mathcal F^{\beta, \sigma}} u^{\beta-1}du 
\leq \Big|\Big|\frac{dI_1}{du}\Big|\Big|_{\mathcal F^{\beta, \sigma}} \frac{t^\beta-s^\beta}{\beta}\notag \\ 
&\leq  \Big|\Big|\frac{dI_1}{du}\Big|\Big|_{\mathcal F^{\beta, \sigma}}   \frac{(t-s)^\beta}{\beta}, \hspace{1cm} 0\leq s<t\leq T.
\end{align*}
Thus, $I_1\in \mathcal C^\beta([0,T];E)$ a.s.

Next, we  verify that for any  $ 0<\rho<\frac{1}{2},$ 
$$W_0\in \mathcal C^{\rho}([0,T];E) \hspace{1cm} \text{ a.s.} $$
  Let $0\leq s<t\leq T$, then 
\begin{align*}
W_0(t)-W_0(s)=&\int_s^t U(t,r)G(r)dw_r +\int_0^s [U(t,r)-U(s,r)]G(r)dw_r.
\end{align*}
Since the integrals in the right-hand side are independent stochastic functions which  have a zero expectation,  we have
\begin{align*}
&\mathbb E \|W_0(t)-W_0(s)\|^2  \notag\\
=&\mathbb E \Big|\Big|\int_s^t U(t,r)G(r)dw_r\Big|\Big|^2+\mathbb E\Big|\Big|\int_0^s [U(t,r)-U(s,r)]G(r)dw_r\Big|\Big|^2  \notag\\
\leq  &c(E) \int_s^t \mathbb E \|U(t,r)A(r)^{-\delta} A(r)^\delta G(r)\|_{L(\mathbb C^d;E)}^2 dr\notag\\
&+ c(E) \int_0^s \mathbb E \|[U(t,r)-U(s,r)]G(r)\|_{L(\mathbb C^d;E)}^2dr  \notag\\
\leq & c(E) \int_s^t \mathbb E \|U(t,r)\|^2 \|A(r)^{-\delta}\|^2 \|A(r)^\delta G(r)\|_{L(\mathbb C^d;E)}^2 dr\notag\\
&+ c(E)\int_0^s \|[U(t,r)-U(s,r)]A(r)^{-\delta}\|^2 \mathbb E\|A(r)^\delta G(r)\|_{L(\mathbb C^d;E)}^2dr.
\end{align*}
So, by using  \eqref{E4}, \eqref{E6}  and   \eqref{E14}, 
\begin{align*}
\mathbb E& \|W_0(t)-W_0(s)\|^2  \notag\\
\leq &c(E)  \iota_0^2 \iota_\delta^2 \int_s^t \mathbb E[\|A(0)^\delta G(0)\|_{L(\mathbb C^d;E)} +\zeta  r^\sigma]^2dr\notag\\
&+ c(E)\int_0^s\|[U(t,r)-U(s,r)]A(r)^{-\delta}\|^2 \mathbb E[\|A(0)^\delta G(0)\|_{L(\mathbb C^d;E)} +\zeta  r^\sigma]^2dr.  \notag
\end{align*}

The first term in the right-hand side of the latter inequality can be estimated as follows:  
\begin{align*}
&c(E)\iota_0^2 \iota_\delta^2 \int_s^t \mathbb E [\|A(0)^\delta G(0)\|_{L(\mathbb C^d;E)} +\zeta  r^\sigma]^2dr\\
\leq &2 c(E)\iota_0^2 \iota_\delta^2 \int_s^t  [\mathbb E\|A(0)^\delta G(0)\|_{L(\mathbb C^d;E)}^2 +\mathbb E\zeta^2  r^{2\sigma}]dr\\
= &2 c(E)\iota_0^2 \iota_\delta^2\mathbb E \|A(0)^\delta G(0)\|_{L(\mathbb C^d;E)}^2 (t-s) + \frac {2c(E)\mathbb E\zeta^2  \iota_0^2 \iota_\delta^2 (t^{1+2\sigma}-s^{1+2\sigma})}{1+2\sigma}\notag\\
\leq & 2c(E)\iota_0^2 \iota_\delta^2\mathbb E\|A(0)^\delta G(0)\|_{L(\mathbb C^d;E)}^2 (t-s) + \frac {2c(E)\mathbb E\zeta^2  \iota_0^2 \iota_\delta^2 (t-s)^{1+2\sigma}}{1+2\sigma}.\notag
\end{align*}

For the second term, since
\begin{align*}
&\|[U(t,r)-U(s,r)]A(r)^{-\delta}\|^2\\
&=\Big |\Big|\int_s^t A(u)S(u,r) A(r)^{-\delta}du\Big|\Big|^2\\
&\leq  \kappa_1^2 \Big[\int_s^t (u-r)^{\delta-1}du\Big]^2=\frac{\kappa_1^2}{\delta^2}  [(t-r)^\delta-(s-r)^\delta]^2  \\
&\leq  \frac{\kappa_1^2}{\delta^2}  (t-s)^{2\delta},
\end{align*}
we have 
\begin{align*}
&c(E)\int_0^s\|[U(t,r)-U(s,r)]A(r)^{-\delta}\|^2 \mathbb E [\|A(0)^\delta G(0)\|_{L(\mathbb C^d;E)} +\zeta  r^\sigma]^2dr\\
&\leq   \frac{2c(E)\kappa_1^2}{\delta^2}  (t-s)^{2\delta} \int_0^s [\mathbb E\|A(0)^\delta G(0)\|_{L(\mathbb C^d;E)}^2 +\mathbb E\zeta^2  r^{2\sigma}]dr\\
&= \Big[\frac{2c(E)\kappa_1^2 \mathbb E \|A(0)^\delta G(0)\|_{L(\mathbb C^d;E)}^2 s}{\delta^2} 
 + \frac {2c(E)\mathbb E\zeta^2   \kappa_1^2 s^{1+2\sigma}}{(1+2\sigma)\delta^2}\Big]  (t-s)^{2\delta}.\notag
\end{align*}
Thus, 
\begin{align*}
\mathbb E& \|W_0(t)-W_0(s)\|^2  \notag\\
\leq &2 c(E)\iota_0^2 \iota_\delta^2\mathbb E\|A(0)^\delta G(0)\|_{L(\mathbb C^d;E)}^2 (t-s) + \frac {2c(E)\mathbb E\zeta^2  \iota_0^2 \iota_\delta^2 (t-s)^{1+2\sigma}}{1+2\sigma} \notag\\
&+ \Big[\frac{2c(E)\kappa_1^2 \mathbb E \|A(0)^\delta G(0)\|_{L(\mathbb C^d;E)}^2 s}{\delta^2} 
 + \frac {2c(E)\mathbb E\zeta^2   \kappa_1^2 s^{1+2\sigma}}{(1+2\sigma)\delta^2}\Big](t-s)^{2\delta}, \\
&  0\leq s<t\leq T.
\end{align*}

On the other hand, by the definition of  stochastic integrals, $  W_0$ is a Gaussian process on $[0,T]$. We then  apply Theorem \ref{Kol2} to $W_0$ to obtain that, for any $ 0<\rho<\frac{1}{2},$ 
$$W_0\in \mathcal C^{\rho}([0,T];E) \hspace{1cm} \text{ a.s.} $$
In addition,  if $  G(0)=0$, then for any $ 0<\rho<\min\{\frac{1+\sigma}{2},\delta\}, $ 
 $$W_0\in \mathcal C^{\rho}([0,T];E) \hspace{1cm} \text{ a.s.} $$
In this way, we have concluded that  for any  $  0<\gamma_1\leq \min\{\beta,\frac{1}{2}\},$   $\gamma_1\ne \frac{1}{2}$, 
$$X=I_1+W_0\in \mathcal C^{\gamma_1}([0,T];E) \hspace{1cm}  \text{ a.s.}$$
 Furthermore, when $ G(0)=0$,  for any $ 0<\gamma_1\leq \min\{\frac{1+\sigma}{2}, \delta,\beta\}$, $\gamma_1\ne  \{\frac{1+\sigma}{2}, \delta\},$
  $$X\in \mathcal C^{\gamma_1}([0,T];E) \hspace{1cm} \text{ a.s.}$$

{\bf Step 6}. Fix $ 0<\gamma_2<\min\{\delta-\frac{1}{2},\sigma\}$ and $0<\epsilon\leq T$.  Let us prove that  
$$ AX\in  \mathcal C^{\gamma_2}([\epsilon,T];E) \hspace{1cm} \text{ a.s.}$$

  Theorem \ref{maximal regularity}  and Step 2 provide that 
$$AI_1 \in \mathcal F^{\beta,\sigma}((0,T];E)\subset \mathcal C^{\sigma}([\epsilon,T];E)$$
and
$$W_1\in  \mathcal C^{\gamma_2}([0,T]; E) \hspace{1cm} \text{ a.s.}$$
Therefore, $AX=AI_1+ W_1  \in  \mathcal C^{\gamma_2}([\epsilon,T];E) $  a.s.

By  Steps 1-6,   the proof of the theorem is now complete.
\end{proof}
\subsection{Case where $\nu=1$}
In this subsection, we  consider the favorable case where $\nu=1.$ In other words, the domain $\mathcal D(A(t))$ does not dependent on time $t$. 
We show that the  condition  {\rm (G)} can naturally be replaced by the condition: 
 \begin{itemize}
    \item  [(G$'$)] There exist  $ \delta_1 >\frac{1}{2}$    and a square integrable random  variable $  \bar\zeta  $ such that 
    $$G(t) \in \mathcal D(A(0)^{\delta_1}) \hspace{1cm} \text{a.s.}, 0\leq t\leq T,$$
and 
$$  \|A(0)^{\delta_1} [G(t)- G(s)]\| \leq \bar\zeta |t-s|^\sigma  \hspace{0.9cm}  \text{ a.s.,     }  0\leq s\leq t\leq T.$$
In addition,  
$$  \mathbb E  \|A(0)^{\delta_1} G(t)\|_{L(\mathbb C^d;E)}^2<\infty, \hspace{1cm}  0\leq t\leq T. $$
\end{itemize}
\begin{proposition}  \label{lemma4.0}
Under  {\rm (A1)}, {\rm (A2)} and {\rm (A3)} with $\nu=1$, 
if {\rm (G$'$)}  takes place, then so does {\rm (G)}.
\end{proposition}

For the proof, we notice the following lemma.
\begin{lemma}  \label{lemma4.1}
For any $0<\theta_1<\theta_2<1$, there exists  $\alpha_{\theta_1,\theta_2}>0$ such that 
$$\|[A(t)^{\theta_1} -A(s)^{\theta_1} ] A(s)^{-\theta_2} \| \leq \alpha_{\theta_1,\theta_2} |t-s|^\mu, \hspace{1cm} 0\leq s\leq t\leq T.$$
\end{lemma}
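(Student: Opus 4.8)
The plan is to prove the estimate
$$\|[A(t)^{\theta_1} -A(s)^{\theta_1} ] A(s)^{-\theta_2} \| \leq \alpha_{\theta_1,\theta_2} |t-s|^\mu$$
by passing through the bounded operators $A(t)^{-1}$ and exploiting hypothesis {\rm (A3)}, which under $\nu=1$ reads $\|A(t)[A(t)^{-1}-A(s)^{-1}]\|\leq N(t-s)^\mu$. Fix $0<\theta_1<\theta_2<1$ and write $\eta=\theta_2-\theta_1\in(0,1)$. First I would reduce the fractional powers to resolvent integrals: for $0<\theta<1$ one has the Balakrishnan-type representation
$$A(s)^{-\theta} = \frac{\sin\pi\theta}{\pi}\int_0^\infty \lambda^{-\theta}(\lambda+A(s))^{-1}\,d\lambda,$$
and correspondingly a formula for $A(t)^{\theta_1}A(s)^{-\theta_2}$ after writing $A(t)^{\theta_1} = A(t)\cdot A(t)^{-(1-\theta_1)}$. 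The quantity to be estimated is then expressed through the difference of resolvents $(\lambda+A(t))^{-1}-(\lambda+A(s))^{-1}$, to which the resolvent identity applies. Throughout one uses {\rm (A1)} to control $\|\lambda(\lambda+A(s))^{-1}\|$ and $\|A(s)(\lambda+A(s))^{-1}\|$ by constants uniformly in $s$ and $\lambda\ge 0$, together with the bounds $\|A(s)^{\pm\theta}\|$-type estimates of the form \eqref{E6} and their boundedness analogues.

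The core computation is the bound on $[A(t)^{\theta_1}-A(s)^{\theta_1}]A(s)^{-\theta_1}$, i.e. the case $\theta_2=\theta_1$, which would then be combined with $\|A(s)^{\theta_1-\theta_2}\|=\|A(s)^{-\eta}\|\le\iota_\eta$ from \eqref{E6} to get the general statement — but one must be slightly careful, since $\theta_2<1$ is needed precisely so that the extra negative power $A(s)^{-\eta}$ is genuinely smoothing and the representation integrals converge. Concretely, I would write
$$[A(t)^{\theta_1}-A(s)^{\theta_1}]A(s)^{-\theta_2}
= \frac{\sin\pi(1-\theta_1)}{\pi}\int_0^\infty \lambda^{\theta_1-1}\Big[A(t)(\lambda+A(t))^{-1}-A(s)(\lambda+A(s))^{-1}\Big]A(s)^{-\theta_2}\,d\lambda,$$
then apply the resolvent identity $A(t)(\lambda+A(t))^{-1}-A(s)(\lambda+A(s))^{-1}=\lambda\big[(\lambda+A(s))^{-1}-(\lambda+A(t))^{-1}\big]=\lambda(\lambda+A(t))^{-1}[A(t)-A(s)](\lambda+A(s))^{-1}$, and insert $A(t)-A(s)=A(t)[A(s)^{-1}-A(t)^{-1}]A(s)$ so that {\rm (A3)} with $\nu=1$ supplies the factor $N(t-s)^\mu$. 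The remaining operator products are uniformly bounded once one splits the $\lambda$-integral at $\lambda=|t-s|^{-?}$ or, more cleanly, distributes the powers of $\lambda$ from $A(s)(\lambda+A(s))^{-1}$ versus $(\lambda+A(s))^{-1}A(s)^{-\theta_2}$ using the standard interpolation bound $\|A(s)^{\alpha}(\lambda+A(s))^{-1}\|\le C\lambda^{\alpha-1}$ for $0\le\alpha\le 1$.

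The main obstacle is bookkeeping the exponents so that the $\lambda$-integral converges at both ends: near $\lambda=0$ one needs the negative power $\theta_1-1$ (and the extra decay from $A(s)^{-\theta_2}$ applied to $(\lambda+A(s))^{-1}$) to beat the growth, and near $\lambda=\infty$ one needs the two resolvent factors to provide $\lambda^{-2}$-type decay against $\lambda^{\theta_1-1}\cdot\lambda$ (the $\lambda$ coming from the resolvent identity). A clean way to handle this is to interpolate: distribute $A(s)^{-\theta_2}=A(s)^{-\theta_1}A(s)^{-\eta}$, absorb $A(s)^{-\theta_1}$ into $(\lambda+A(s))^{-1}$ as $A(s)^{-\theta_1}(\lambda+A(s))^{-1}$, bounded by $C\lambda^{-\theta_1-1}$... wait, rather keep $A(s)^{-\eta}$ bounded and use $\|(\lambda+A(s))^{-1}A(s)^{-\theta_1}\|\le C\min\{\lambda^{-1-\theta_1},\lambda^{-1}\}$-type estimates to force integrability. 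Once the exponent arithmetic closes, the constant $\alpha_{\theta_1,\theta_2}$ is an explicit product of $N$, $M_\varpi$, the $\iota$'s, and a beta-function value, and $|t-s|^\mu\le T^{\mu}$-type crude bounds are not needed since the factor $(t-s)^\mu$ is produced directly. I expect no conceptual difficulty beyond this exponent juggling, which mirrors the classical proof that $t\mapsto A(t)^{\theta}$ is Hölder continuous in the operator norm on suitable subspaces (cf.\ the results quoted from \cite{yagi}).
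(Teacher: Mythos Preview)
Your strategy---expressing the fractional powers as resolvent integrals, applying the resolvent identity to extract the factor $A(t)[A(t)^{-1}-A(s)^{-1}]$ from {\rm (A3)} (with $\nu=1$), and then checking that the remaining $\lambda$-integral converges---is exactly the mechanism behind the paper's proof, and the exponent count does close: after collecting all factors one is left with an integrand of order $|\lambda|^{\theta_1-\theta_2-1}$, integrable at infinity precisely because $\theta_1<\theta_2$. So the outline is correct.

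There are, however, two genuine differences from the paper's argument, and the second exposes a gap in your proposal. First, the paper works with the Dunford--Riesz representation $A(\cdot)^{\theta_1}=\frac{1}{2\pi i}\int_{\Gamma}\lambda^{\theta_1}(\lambda-A(\cdot))^{-1}\,d\lambda$ on a sectorial contour $\Gamma$ bounded away from the origin, rather than the Balakrishnan integral over $[0,\infty)$; this removes the need to treat small and large $\lambda$ separately, since only the behaviour at infinity matters on $\Gamma$. Second, and more importantly, the paper carries out the entire computation for the Yosida approximations $A_n(t)=A(t)[1+n^{-1}A(t)]^{-1}$, which are \emph{bounded} operators, obtains the estimate uniformly in $n$, and only then lets $n\to\infty$. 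This is not a cosmetic device. Your displayed identity
\[
[A(t)^{\theta_1}-A(s)^{\theta_1}]A(s)^{-\theta_2}
= \frac{\sin\pi(1-\theta_1)}{\pi}\int_0^\infty \lambda^{\theta_1-1}\big[A(t)(\lambda+A(t))^{-1}-A(s)(\lambda+A(s))^{-1}\big]A(s)^{-\theta_2}\,d\lambda
\]
requires, for the integral on the right even to converge at $\lambda=\infty$, that $A(s)^{-\theta_2}x$ lie in $\mathcal D(A(t)^{\alpha})$ for some $\alpha>\theta_1$. But $\theta_2<1$, so $A(s)^{-\theta_2}x$ is only known to lie in $\mathcal D(A(s)^{\theta_2})$, and the inclusion $\mathcal D(A(s)^{\theta_2})\subset\mathcal D(A(t)^{\theta_1})$ is essentially what the lemma is asserting---so you cannot invoke it as a hypothesis. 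Working with bounded $A_n$'s eliminates this circularity entirely. Alternatively, you can salvage your route by first establishing the estimate on the dense subspace $\mathcal D(A(0))=\mathcal D(A(s))=\mathcal D(A(t))$ (where the Balakrishnan formula is unproblematic) and then extending by continuity, but that step must be stated. Your hesitation in the exponent discussion (``wait, rather\dots'') also suggests the endpoint analysis was not fully carried through; it does work, but you should write it out cleanly rather than gesture at interpolation.
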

For the proof, see \cite[(3.92)]{yagi} with $\tau=0$.
\begin{proof}[Proof of Proposition \ref{lemma4.0}]
Let $\frac{1}{2}<\delta<\delta'<\delta_1$. For  $0\leq s\leq t\leq T$, we have
\begin{align*}
\|&A(t)^\delta G(t)-A(s)^\delta G(s)\|_{L(\mathbb C^d;E)}\\
=&\|[A(t)^\delta [G(t)-G(s)] +[A(t)^\delta- A(s)^\delta] G(s)\|_{L(\mathbb C^d;E)}\\
\leq & \|[A(t)^\delta-A(0)^\delta+A(0)^\delta] A(0)^{-\delta_1}\|      \| A(0)^{\delta_1} [G(t)-G(s)]\|_{L(\mathbb C^d;E)} \\
& + \|[A(t)^\delta- A(s)^\delta] A(s)^{-\delta'}\| \|[A(s)^{\delta'}-A(0)^{\delta'} + A(0)^{\delta'}] A(0)^{-\delta_1}\|  \\
&\times \|A(0)^{\delta_1} G(s)\|_{L(\mathbb C^d;E)}\\
\leq & \|[A(t)^\delta-A(0)^\delta] A(0)^{-\delta_1}\|      \| A(0)^{\delta_1} [G(t)-G(s)]\|_{L(\mathbb C^d;E)} \\
&+ \|A(0)^{\delta-\delta_1}\|      \| A(0)^{\delta_1} [G(t)-G(s)]\|_{L(\mathbb C^d;E)}\\
& + \|[A(t)^\delta- A(s)^\delta] A(s)^{-\delta'}\| \|[A(s)^{\delta'}-A(0)^{\delta'} ] A(0)^{-\delta_1}\|\\
&\hspace{1cm} \times \|A(0)^{\delta_1} G(s)\|_{L(\mathbb C^d;E)}\\
& + \|[A(t)^\delta- A(s)^\delta] A(s)^{-\delta'}\| \| A(0)^{\delta'-\delta_1}\| \|A(0)^{\delta_1} G(s)\|_{L(\mathbb C^d;E)}.
\end{align*}
By using Lemma \ref{lemma4.1} and {\rm (G$'$)}, it is easily seen that  
\begin{align*}
\|&A(t)^\delta G(t)-A(s)^\delta G(s)\|_{L(\mathbb C^d;E)}\\
\leq & [\alpha_{\delta,\delta_1} t^\mu       (t-s)^\sigma + \|A(0)^{-1}\|^{\delta_1-\delta}]    \bar\zeta  (t-s)^\sigma\\
& + \alpha_{\delta,\delta'} [\alpha_{\delta',\delta_1} s^\mu + \|A(0)^{-1}\|^{\delta_1-\delta'}] \\
&\times \|A(0)^{\delta_1} G(s)\|_{L(\mathbb C^d;E)} (t-s)^\mu, \hspace{1cm} 0\leq s\leq t\leq T.
\end{align*}
\end{proof}

\section{An application}  \label{section4}
Our abstract results can be applicable to many stochastic parabolic equations. We here present an example of stochastic diffusion equation formulated in the $L_p $ space $(2 \le p < \infty).$

Consider  a stochastic diffusion equation
\begin{equation}  \label{E17}
\left\{\begin{aligned}
  &dX(x,t) = \left\{\sum_{i,j=1}^n \frac{\partial}{\partial x_j}
             \left[a_{ij}(x,t)\frac{\partial X}{\partial x_i}\right]
             - b(x,t)X + f(t)\varphi_1(x)\right\}dt  \\
  &\hspace{2cm} + g(t)\varphi_2(x)dw_t \hspace{1.4cm}
             \text{in} \quad \mathcal O \times (0,T), \\
  &X = 0  \hspace{5cm}\text{on}\quad  \partial\mathcal O \times (0,T), \\
  &X(x,0) = X_0(x)  \hspace{3.3cm}\text{in}\quad  \mathcal O
\end{aligned} \right.
\end{equation}
in a bounded domain $\mathcal O\subset \mathbb R^n$ with $\mathcal C^2$ boundary $\partial\mathcal O$ $(n=1,2,\dots)$.  Here, $a_{ij} (\cdot,\cdot), 1\leq i,j\leq n,$ and $b(\cdot,\cdot)$ are real-valued functions in $\overline{\mathcal O}\times [0,T]$; $\varphi_1$ and $\varphi_2$ are complex-valued functions in $\mathcal O$;  $\{w_t,t\geq 0\}$ is a one-dimensional Brownian motion on a complete filtered probability space $(\Omega, \mathcal F,\{\mathcal F_t\}_{t\geq 0}, \mathbb P);$
 $f$ and $ g $ are  complex-valued stochastic processes; and $X_0(x)$ is an initial random function.

We assume the following three conditions.
\begin{align}
&\text{For some } 0 < \mu \le 1, a_{ij} \in \mathcal C^\mu([0,T]; \mathcal C^2(\overline{\mathcal O}))   \label{E18}\\
&\text{and } b \in \mathcal C^\mu([0,T]; L_\infty(\mathcal O)).  \notag\\
&\text{There exists a constant } a_0 > 0 \text{ such that } \label{E19} \\
& \hspace{2cm} \sum_{i,j=1}^n a_{ij}(x,t) z_iz_j \ge a_0\|z\|_{\mathbb R^n}^2   \notag\\
 &\text{for } z=(z_1,\ldots,z_n) \in \mathbb R^n \text{ and }
   (x,t) \in \overline{\mathcal O} \times [0,T].   \notag\\
& \text{There exists a constant } b_0 > 0 \text{ such that }  \label{E20} \\
&\hspace{2cm} b(x,t) \ge b_0,\; (x,t) \in \overline{\mathcal O} \times [0,T].   \notag
\end{align}


Let us formulate \eqref{E17} as a problem of the form \eqref{E1}. We set  the underlying space  $E=L_p(\mathcal O), $ where $p$ is a fixed exponent such that $2 \le p < \infty$.
As stated in Section \ref{section2}, $E$ is an M-type $2$ Banach space.

For each $0\leq t \leq T$, let $A(t)$ be a realization of the differential operator
\begin{equation*}
  -\sum_{i,j=1}^n \frac{\partial}{\partial x_j}
   \left[ a_{ij}(x,t)\frac{\partial}{\partial x_j} \right] + b(x,t)
\end{equation*}
in $L_p(\mathcal O)$ under the Dirichlet boundary conditions on $\partial\mathcal O$. According to \cite[Theorems 2.12 and 2.15]{yagi}, the operators $A(t)$ are  sectorial operators of $E$ of angle $\omega $ less than $\frac\pi2$ having a domain 
$$\mathcal D(A(t)) \equiv H^2_{p,D}(\mathcal O) = \{u \in H_p^2(\mathcal O);\; u_{|\partial\mathcal O}=0\},$$
 here $H_p^2(\mathcal O)$ denotes the space of all complex valued-functions whose partial derivatives in the distribution sense up to the second order belong to $L_p(\mathcal O)$. Since $\mathcal D(A(t))$ is independent of $t$, $\{A(t), 0\leq t\leq T\}$  satisfies {\rm (A2)}  with $\nu=1$. In addition, it is directly verified from \eqref{E18}  (see \cite[(3.68)]{yagi}) that $\{A(t), 0\leq t\leq T\}$ satisfies {\rm (A3)}, too, with $\nu=1$.

We set also stochastic processes $F(t)=f(t)\varphi_1(x)$ and $G(t)=g(t)\varphi_2(x)$, and assume that
\begin{align}
 &\varphi_1 \in L_p(\mathcal O), \text{ and for some } 0 < \beta \le 1 \text{ and } 0 < \sigma < \min\{\beta,\mu\},  \label{E21}\\
 &\hspace{2cm}  f \in \mathcal F^{\beta,\sigma}((0,T]; \mathbb  C)
  \qquad \text{a.s.}   \notag\\
& \varphi_2 \in H^{1+\varepsilon_1}_{p,D}(\mathcal O) \text{ with some } \varepsilon_1 > 0, \text{ and } g \in \mathcal C^\sigma([0,T]; \mathbb  C) \text{ a.s.}    \label{E22}\\
& \text{with a uniform estimate }     \notag\\
 &\hspace{2cm}  |g(t)-g(s)| \le C_1(t-s)^\sigma,
  \qquad  0 \le s \le t \le T,\; \text{a.s.}     \notag\\
&\text{and with the condition } \mathbb  E |g(0)|^2 < \infty.  \notag \\
&\text{Here, } C_1>0 \text{ and }   \sigma \text{ is } \text{as above}.   \notag
\end{align}

Then, {\rm (F)} and {\rm (G)} are fulfilled. Indeed, \eqref{E21}  directly implies {\rm (F)} a.s. As for {\rm (G)}, it suffices to verify that \eqref{E22}  implies (G$'$). Then, as remarked below, $\varphi_2 \in H^{1+\varepsilon_1}_{p,D}(\mathcal O)$ implies $\varphi_2 \in \mathcal D(A(0)^{\delta_1})$ for any exponent $\delta_1 < \frac{1+\varepsilon_1}{2}$.  Therefore, we can take a $\delta_1 > \frac12$ so that $G(t) \in \mathcal D(A(0)^{\delta_1}),\, 0 \le t \le T$, a.s. In addition,
\begin{align*}
 & \|A(0)^{\delta_1}[G(t)-G(s)]\|_{L(\mathbb C;E)}  \\
  &\le \|A(0)^{\delta_1}\varphi_2\|_E \, |g(t)-g(s)|   \\
  &\le C_1 \|A(0)^{\delta_1}\varphi_2\|_E \, (t-s)^\sigma,
   \hspace{1cm}  0 \le s \le t \le T,\; \text{a.s.}
\end{align*}
Similarly,
\begin{align*}
  \mathbb E\|A(0)^{\delta_1}G(t)\|^2_{L(\mathbb C;E)}
  &= \|A(0)^{\delta_1}\varphi_2\|^2_E \, \mathbb E|g(t)|^2  \\
   &\le \|A(0)^{\delta_1}\varphi_2\|^2_E \, \mathbb E[C_1t^\sigma +|g(0)|]^2  \\
  &\le 2\|A(0)^{\delta_1}\varphi_2\|^2_E \,
         [(C_1t^\sigma)^2+\mathbb E|g(0)|^2], \qquad  0 \le t \le T.
\end{align*}
Hence, (G$'$) is verified.

We finally take $X_0$ in such a way that
\begin{align}
 &\text{if } \beta \ge \frac1{2p}, \text{ then } X_0 \in H^{2\beta+\varepsilon_2}_{p,D}(\mathcal O) \text{ a.s.~with some }\varepsilon_2 >0  \label{E23}\\
&\text{and with the condition } \mathbb E\|X_0\|^2_{H^{2\beta+\varepsilon_2}_{p,D}} < \infty.    \text{ Meanwhile}, 
 \notag\\
&\text{if } \beta < \frac1{2p}, \text{ then } X_0 \in H^{2\beta+\varepsilon_3}_p(\mathcal O) \text{ a.s.~with } \text{some } \varepsilon_3 >0\notag \\
& \text{and with the condition } \mathbb E\|X_0\|^2_{H^{2\beta+\varepsilon_3}_p} < \infty, \beta \text{ being as above.}  \notag
\end{align}
Then, as remarked below, \eqref{E23}  implies 
$$X_0 \in \mathcal D(A(0)^\beta) \text{ a.s.~and } \mathbb E\|A(0)^\beta X_0\|^2_E < \infty.$$

We have thus verified that all the structural assumptions of Theorem \ref{strictSolutions} are fulfilled for the problem \eqref{E17}.

\begin{proposition}
Let $2 \le p < \infty$. Under \eqref{E18}$\sim$\eqref{E23}, there exists a unique strict solution $X$ to \eqref{E17} in the function space:
\begin{equation*}
  X \in \mathcal C^{\gamma_1}([0,T]; L_p(\mathcal O), \quad
  A^\beta X \in \mathcal C([0,T]; L_p(\mathcal O))  \qquad  a.s.,
\end{equation*}
and
\begin{equation*}
  AX \in \mathcal C^{\gamma_2}([\tau,T]; L_p(\mathcal O))  \qquad  a.s.
\end{equation*}
with any exponents $0 < \gamma_1 < \min\{\beta,\tfrac12\}$ and $0 < \gamma_2 < \min\{\delta-\tfrac12,\sigma\}$ for arbitrarily fixed time $\tau > 0$.
\par

Furthermore, if $\beta \ge \delta$, then
\begin{multline*}
  \mathbb E\|A(t)^\beta X(t)\|^2_{L_p} \le C[\mathbb E\|A(0)^\beta X_0\|^2_{L_p}
      + \mathbb E\|f\|_{\mathcal F^{\beta,\sigma}}^2\|\varphi_1\|^2_{L_p}  \\
      + \|A(0)^\delta\varphi_2\|_{L_p}^2\mathbb E |g(0)|^2t^{1-2(\beta-\delta)}
      + t^{1-2(\beta-\delta)+2\sigma}],
      \qquad  0 \le t \le T.
\end{multline*}
Meanwhile, if $\beta < \delta$, then
\begin{multline*}
  \mathbb E\|A(t)^\beta X(t)\|^2_{L_p} \le C[\mathbb E\|A(0)^\beta X_0\|^2_{L_p}
      + \mathbb E\|f\|_{\mathcal F^{\beta,\sigma}}^2\|\varphi_1\|^2_{L_p}  \\
      + \|A(0)^\delta\varphi_2\|_{L_p}^2\mathbb E |g(0)|^2t
      + t^{1+2\sigma}],
      \qquad  0 \le t \le T.
\end{multline*}
Here,  $C>0$ is some  constant, which depends on the exponents.
\end{proposition}

\begin{remark}
According to \cite[Theorem 5.2]{yagi0.5} or \cite[Theorem 16.15]{yagi}, if $A(0)$ has a bounded $H_\infty$ functional calculus, that is equivalent to the integrable condition
\begin{multline}    \label{E24}
  \int_{\varGamma} |\lambda|^{2\theta-1}
       |\langle{A(0)^{2(1-\theta)}(\lambda-A(0))^{-2}f,g}
        \rangle_{L_p \times L_q}| |d\lambda|   \\
    \le C_2 \|f\|_{L_p} \|g\|_{L_q},
    \qquad  f \in L_p(\mathcal O),\, g \in L_q(\mathcal O),
\end{multline}
for some $0 < \theta < 1$ and $C_2>0$, where $\varGamma$ is an integral contour such that 
$$\varGamma\,{:}\, \lambda = re^{\pm\varpi i},\, 0 \le r < \infty,$$
 and $\frac1p+\frac1q = 1$, then the domains of its fractional powers are given as follows. For $0 < \theta < 1$, the domains $\mathcal D(A(0)^\theta)$ are characterized by
\begin{equation*}
  \mathcal D(A(0)^\theta) = \begin{cases}
      H^{2\theta}_p(\mathcal O), \qquad 0 < \theta < \tfrac1{2p},  \\
      H^{2\theta}_{p,D}(\mathcal O), \qquad \tfrac1{2p} < \theta < 1,
             \theta\not= \tfrac{p+1}{2p}    \end{cases}
\end{equation*}
with norm equivalence. Here, $H^s_p(\mathcal O)$ denotes the Lebesgue space on $\mathcal O$ of exponent $s \ge 0$. For $s > \frac1p$, $H^s_{p,D}(\mathcal O)$ denotes a subspace of $H^s_p(\mathcal O)$ which consists of functions vanishing on the boundary $\partial\mathcal O$, i.e., 
$$H^s_{p,D}(\mathcal O) = \{u \in H^s_p(\mathcal O); u_{|\partial\mathcal O}=0\}.$$

 By the moment inequality
\begin{equation*}
  \|A(0)^{1-\theta}(\lambda-A(0))^{-1}\|_{L(E)}
  \le C_3(|\lambda|+1)^{-\theta},
  \qquad  \lambda \in \varGamma
\end{equation*}
with some $C_3>0$, 
we have an estimate for the integrand of \eqref{E24}  only that
\begin{align*}
  &|\lambda|^{2\theta-1}
       |\langle{A(0)^{2(1-\theta)}(\lambda-A(0))^{-2}f,g}
        \rangle_{L_p \times L_q}|  \\
  & \le C_3^2 |\lambda|^{2\theta-1}(|\lambda|+1)^{-2\theta}
        \|f\|_{L_p} \|g\|_{L_q}.
\end{align*}
In this sense, the integral in \eqref{E24}  is critical, and the integrable condition is not always the case.
\par

On the other hand, it is always possible to estimate $\mathcal D(A(0)^\theta)$ closely from interior. In fact,
\begin{equation*}
  \mathcal D(A(0)^\theta) \supset \begin{cases}
      H^{2\theta'}_p(\mathcal O), \qquad 0 < \theta < \theta' < \tfrac1{2p},  \\
      H^{2\theta'}_{p,D}(\mathcal O), \qquad \tfrac1{2p} \le \theta < \theta' < 1
               \end{cases}
\end{equation*}
with continuous inclusion. This can then be verified just by repeating the same arguments as in the proof of \cite[Theorem 5.2]{yagi0.5} or \cite[Theorem 16.15]{yagi} (cf. also \cite[(16.36)]{yagi}).  
\end{remark}

\section*{Acknowledgements}
The authors heartily express their gratitude to the referees of this paper for making useful and  constructive comments on the style of paper.

\end{document}